\documentclass[10pt,reqno]{amsart}
\usepackage{amssymb,amsmath,amscd,amsthm}
\usepackage[top=3.6cm, bottom=3.4cm, left=3.4cm, right=3.4cm]{geometry}%includefoot
\usepackage{hyperref}

  \makeatletter
    
    \@addtoreset{equation}{section}
  \makeatother

\theoremstyle{plain}
\newtheorem{theorem}{Theorem}[section]                    
\newtheorem{proposition}[theorem]{Proposition}            
                
\newtheorem{lemma}[theorem]{Lemma}

\theoremstyle{definition}
\newtheorem{remark}[theorem]{Remark}

\def\us#1_#2{\underset{#2}{#1}}
\def\os#1^#2{\overset{#2}{#1}}
\def\i<#1>{\langle {#1} \rangle}

\DeclareMathOperator{\spec}{sp}

\newcommand{\DT}{\Delta\bfT}
\newcommand{\act}{\curvearrowright}
\DeclareMathOperator{\lspan}{span}
\DeclareMathOperator{\ospan}{\overline{\lspan}}

\DeclareMathOperator{\id}{id}

\makeatletter 
\@tempcnta\z@
\loop\ifnum\@tempcnta<26
\advance\@tempcnta\@ne
\expandafter\edef\csname frak\@Alph\@tempcnta\endcsname{\noexpand\mathfrak{\@Alph\@tempcnta}}
\expandafter\edef\csname l\@Alph\@tempcnta\endcsname{\noexpand\mathbb{\@Alph\@tempcnta}}
\expandafter\edef\csname cal\@Alph\@tempcnta\endcsname{\noexpand\mathcal{\@Alph\@tempcnta}}
\expandafter\edef\csname rm\@Alph\@tempcnta\endcsname{\noexpand\mathrm{\@Alph\@tempcnta}}
\expandafter\edef\csname bf\@Alph\@tempcnta\endcsname{\noexpand\mathbf{\@Alph\@tempcnta}}
\repeat
\makeatother

\allowdisplaybreaks[4]
\begin{document}
\title[Boundary rigidity for free products]{Boundary rigidity for free product $\rmC^*$-algebras}
\author[K.~Hasegawa]{Kei~Hasegawa}
\address{Graduate~School~of~Mathematics, Kyushu~University, Fukuoka~819-0395, Japan}
\email{k-hasegawa@math.kyushu-u.ac.jp}
%\keywords{amalgamated free product, $KK$-theory}
%\subjclass[2010]{Primary~46C05, Secondary~46L80}
\begin{abstract}
For any reduced free product $\rmC^*$-algebra $(A, \varphi) =(A_1, \varphi_1) \star (A_2, \varphi_2)$,
we prove a boundary rigidity result for the embedding of $A$ into its associated $\rmC^*$-algebra $\DT(A, \varphi)$.
This provides new examples of rigid embeddings of exact $\rmC^*$-algebras into purely infinite simple nuclear $\rmC^*$-algebras.
\end{abstract}
\maketitle

\section{Introduction}

By a deep theorem of Kirchberg--Phillips \cite{Kirchberg-Phillips},
any separable exact $\rmC^*$-algebra is embedded into the Cuntz algebra $\calO_2$.
In the present paper, we are interested in rigid or canonical embeddings of exact $\rmC^*$-algebras
into nuclear ones.
Here, an embedding $A \subset B$ of $\rmC^*$-algebras is called \emph{rigid} if
the identity map on $B$ is the unique completely positive map from $B$ to itself which is identical on $A$.
Such rigid embeddings naturally arise from boundary actions;
For any action $\Gamma \act X$ of a discrete group $\Gamma$ on a $\Gamma$-boundary in the sense of Furstenberg \cite{Furstenberg}
the embedding of the reduced group $\rmC^*_{\rm r}(\Gamma)$ into the reduced crossed product $C(X)\rtimes_{\rm r} \Gamma$
is rigid.

For the action $\lF_2 \act \partial \lF_2$ of the free group on the Gromov boundary,
Ozawa \cite{Ozawa-BLMS} proved a stronger rigidity result that the crossed product $C(\partial \lF_2) \rtimes \lF_2$ naturally sits between $\rmC^*_{\rm r}(\lF_2)$ and its injective envelope
$I(\rmC^*_{\rm r}(\lF_2))$.
The injective envelope $I(A)$ of a $\rmC^*$-algebra $A$
is an injective $\rmC^*$-algebra introduced by Hamana \cite{Hamana}
which satisfies that $A \subset I(A)$ is rigid.
In the same paper,
Ozawa conjectured that for every separable exact $\rmC^*$-algebra $A$ there exists a nuclear $\rmC^*$-algebra $N(A)$ such that $A \subset N(A) \subset I(A)$.
For general exact discrete group $\Gamma$,
Kalantar--Kennedy \cite{Kalantar-Kennedy} proved that
one can take $N(\rmC^*_{\rm r}(\Gamma))$ as the crossed product $C(\partial_{\rm F} \Gamma) \rtimes \Gamma$,
where $\partial_{\rm F} \Gamma$ is the Furstenberg boundary of $\Gamma$.
In a recent breakthrough on $\rmC^*$-simplicity,
it turned out that boundary actions and their rigidity play an important role for the analysis of reduced group $\rmC^*$-algebras (see \cite{Kalantar-Kennedy,BKKO}). 
On the other hand, for general $\rmC^*$-algebras beyond the class of group $\rmC^*$-algebras,
there has been no known results so far for Ozawa's conjecture and even for rigid embeddings into nuclear $\rmC^*$-algebras.

In the present paper, we investigate rigid embeddings of reduced free product $\rmC^*$-algebras.
In \cite{Hasegawa-BS} we introduced a $\rmC^*$-algebra $\DT(A, \varphi)$ associated with any reduced free product
$(A, \varphi ) =(A_1, \varphi_1) \star (A_2, \varphi_2)$.
For the reduced group $\rmC^*$-algebra $\rmC^*_{\mathrm{r}}(\Gamma_1 \ast \Gamma_2)$ of any free product group $\Gamma_1 \ast \Gamma_2$
with the canonical tracial state $\tau$,
the associated $\rmC^*$-algebra $\DT(\rmC^*_{\mathrm{r}}(\Gamma_1 \ast \Gamma_2), \tau)$
is naturally identified with $C(\DT)\rtimes_{\mathrm{r}}\Gamma_1 \ast \Gamma_2$
associated to a natural action on
the compactification $\DT$ of the Bass--Serre tree associated with $\Gamma_1 \ast \Gamma_2$ (see \cite{Serre} and \cite{Bowditch} for the
Bass--Serre theory and compactifications of trees, respectively).
When $\Gamma_1$ and $\Gamma_2$ are infinite, then
$\DT$ is a $\Gamma_1 \ast \Gamma_2$-boundary,
which implies the embedding $\rmC^*_{\rm r}(\Gamma_1 \ast \Gamma_2) \subset \DT(\rmC^*_{\rm r}(\Gamma_1 \ast \Gamma_2) , \tau)$ is rigid.
We prove the following generalization for reduced free product $\rmC^*$-algebras with respect to GNS-essential states.
Here, we say that a state on a $\rmC^*$-algebra is \emph{GNS-essential} if the image of the associated GNS representation
contains no non-zero compact operators.

\begin{theorem}\label{thm-rigid}
Suppose that
there exists a net $(b_i)_i$ in $A_1$ such that
$\limsup_i \varphi_1 (b_i b_i^*) \leq 1$,
$\lim_i \varphi_1 (b_i^*x b_i) = \varphi_1(x)$ and $\lim_i \varphi_1(xb_i) =0$ for $x \in A_1$
and  $\varphi_2$ is GNS-essential.
Then, the embedding $A \subset \DT (A, \varphi)$ is rigid, i.e.,
if $\Phi$ is a completely positive map on $\DT(A, \varphi)$ extending the identity map on $A$, then $\Phi = \id$.
\end{theorem}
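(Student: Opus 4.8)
The plan follows the template of Ozawa's boundary-rigidity argument for the free group, adapted to $\DT(A,\varphi)$; passing to unitizations if necessary, we may assume $\Phi$ is a unital completely positive map. The first step is the multiplicative-domain reduction: since $\Phi|_A=\id_A$, a standard argument puts $A$ inside the multiplicative domain of $\Phi$, so that $\Phi(axa')=a\,\Phi(x)\,a'$ for all $a,a'\in A$ and $x\in\DT(A,\varphi)$. I would then invoke from \cite{Hasegawa-BS} the structure of $\DT(A,\varphi)$: it is generated by $A$ together with a ``boundary subalgebra'' $\mathcal{D}$ (the copy of $C(\DT)$ in the group case), carrying a faithful conditional expectation $E\colon\DT(A,\varphi)\to\mathcal{D}$ and normalized by $A$ (so that, in the group case, $\lambda_g C(\DT)\lambda_g^*=C(\DT)$). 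Thanks to the multiplicative-domain property it then suffices to prove $\Phi|_{\mathcal{D}}=\id$: once this is known, $\mathcal{D}$ also lies in the multiplicative domain of $\Phi$, and since the multiplicative domain is a $C^*$-subalgebra containing the generating set $A\cup\mathcal{D}$ it equals $\DT(A,\varphi)$, so $\Phi$ is a unital $*$-endomorphism fixing a generating set, hence $\Phi=\id$.

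Next I would reduce $\Phi|_{\mathcal{D}}=\id$ to the single statement that $\psi:=E\circ\Phi|_{\mathcal{D}}\colon\mathcal{D}\to\mathcal{D}$ is the identity. This is soft: for $d\in\mathcal{D}$, the Kadison--Schwarz inequality $\Phi(d^*d)\ge\Phi(d)^*\Phi(d)$ for the completely positive map $\Phi$ and the Cauchy--Schwarz inequality $E\bigl(\Phi(d)^*\Phi(d)\bigr)\ge E(\Phi(d))^*E(\Phi(d))$ for the conditional expectation combine, using $E\circ\Phi=\id$ on $\mathcal{D}$, to give
\[
d^*d \;=\; E\bigl(\Phi(d^*d)\bigr)\;\ge\;E\bigl(\Phi(d)^*\Phi(d)\bigr)\;\ge\;E(\Phi(d))^*\,E(\Phi(d))\;=\;d^*d ,
\]
so equality holds throughout; by faithfulness of $E$ this forces $\Phi(d)=E(\Phi(d))=d$.

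The heart of the matter — and the step I expect to be the main obstacle — is to prove $\psi=\id_{\mathcal{D}}$, which is the only place the two hypotheses are used. By the multiplicative-domain property of $\Phi$ together with the $A$-equivariance of $E$, the map $\psi$ commutes with the conjugation action of the unitaries of $A$ on $\mathcal{D}$. The three conditions on $(b_i)\subseteq A_1$ say precisely that the images of $b_i$ in $L^2(A_1,\varphi_1)$ are asymptotically unit vectors that tend weakly to zero yet whose induced vector functionals $\varphi_1(b_i^*\,\cdot\,b_i)$ converge to $\varphi_1$ (the condition $\limsup_i\varphi_1(b_ib_i^*)\le 1$ doing the analogous job for $b_i^*$); in the group picture, $(b_i)$ runs off to an end $\omega$ of the Bass--Serre tree. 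The key lemma to establish is that conjugation by $b_i$ drags elements to this boundary point: for every $y\in\DT(A,\varphi)$ one has $b_i^*yb_i\to\ev_\omega(E(y))\cdot 1$ in the weak$^*$ topology of $\DT(A,\varphi)^{**}$, and here GNS-essentiality of $\varphi_2$ is exactly what kills the reduced words whose first letter lies in $\ker\varphi_2$ (the associated vectors in $L^2(A_2,\varphi_2)$ being made asymptotically orthogonal because the GNS image of $\varphi_2$ contains no nonzero compact operators). Granting the lemma, apply it both to $d\in\mathcal{D}$ and to $\Phi(d)$: using $b_i^*\Phi(d)b_i=\Phi(b_i^*db_i)$, the weak$^*$ continuity of the normal extension $\Phi^{**}$, and the two resulting limits $\ev_\omega(d)\cdot 1$ and $\ev_\omega(E(\Phi(d)))\cdot 1=\ev_\omega(\psi(d))\cdot 1$, one gets $\ev_\omega\circ\psi=\ev_\omega$ on $\mathcal{D}$; running this after first conjugating by unitaries of $A$ — i.e.\ at every point of the $A$-orbit of $\omega$ in $\widehat{\mathcal{D}}$ — and invoking that these orbits separate $\mathcal{D}$ (minimality and strong proximality of the boundary action, which the hypotheses on $(b_i)$ and $\varphi_2$ should encode via \cite{Hasegawa-BS}) yields $\psi=\id_{\mathcal{D}}$.

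The principal obstacle is thus the boundary-dragging lemma together with the ensuing separation argument: one must make precise in what sense the given net $(b_i)$ determines a boundary point $\omega$, prove the weak$^*$ convergence $b_i^*yb_i\to\ev_\omega(E(y))\cdot 1$ from exactly the three asymptotic conditions on $(b_i)$ and GNS-essentiality of $\varphi_2$ (the weak$^*$-to-normal passage through $\Phi$ then being automatic since the relevant nets are bounded), and finally show that the resulting family of boundary evaluations, together with the $A$-action, separates $\mathcal{D}$ — equivalently, that the action on $\mathcal{D}$ furnished by \cite{Hasegawa-BS} is a boundary action whose only compatible completely positive self-map is the identity. If that last point is already available from \cite{Hasegawa-BS} in the form ``the only $A$-equivariant unital completely positive self-map of $\mathcal{D}$ is $\id$,'' the whole proof collapses to the multiplicative-domain step and the Kadison--Schwarz step above.
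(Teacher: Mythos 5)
There is a genuine gap, and it sits exactly where you flag ``the heart of the matter.'' Your plan presupposes structure that $\DT(A,\varphi)$ does not have for a general reduced free product: a distinguished ``boundary subalgebra'' $\mathcal D$ carrying a \emph{faithful} $A$-equivariant conditional expectation $E\colon\DT(A,\varphi)\to\mathcal D$, whose spectrum carries a minimal strongly proximal $A$-action with points $\ev_\omega$ to evaluate at. None of this is available outside the group case: $\DT(A,\varphi)$ is generated by $A$ and the two projections $e_1,e_2=1-e_1$; the subalgebra generated by the $A$-conjugates of $e_1$ is noncommutative; and the only conditional expectations the construction supplies are $E_{A_1}^\sim$ and $E_{A_2}^\sim$, which land in $A_1$ and $A_2$ (not in a boundary algebra) and are only \emph{jointly} faithful (Proposition \ref{thm-str}). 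Consequently the Kadison--Schwarz step has no $E$ to be run through, ``$\ev_\omega\circ\psi=\ev_\omega$'' has no meaning, and the closing separation argument (``orbits separate $\mathcal D$'') is not a citable fact but is essentially the theorem itself, as you half-concede in your last sentence. The boundary-dragging lemma you postulate is also left unproved; its correct noncommutative form is Proposition \ref{prop-freeglimm}, $\lim_i\|e_1a_i^*xa_ie_1-\psi_1(x)e_1\|=0$, where the limit is the \emph{state} $\psi_1=\varphi\circ E_{A_1}^\sim$ rather than a boundary evaluation, and proving it requires the explicit word-length structure of $\DT(A,\varphi)$, not just the three asymptotic conditions on $(b_i)_i$.

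The paper's actual route avoids boundary algebras entirely and is worth comparing. Since $\DT(A,\varphi)$ is generated by $A$ and $e_1$, and $A$ lies in the multiplicative domain of $\Phi$ (your first step, which is correct), it suffices to show $x:=\Phi(e_1)=e_1$. The multiplicative domain property yields the two operator inequalities \eqref{eq-axa}, namely $axa^*\le\varphi(a^*a)(1-x)$ for $a\in A_1$ and $b(1-x)b^*\le\varphi(b^*b)x$ for $b\in A_2$. The net $(b_i)_i$ together with Proposition \ref{prop-freeglimm}, GNS-essentiality of $\varphi_2$ (so that $A_2\cap\lK(\calH_2)=0$), and Lemma \ref{lem-bddbelow} then force $\psi_1(x)=0$ and ultimately $E_{A_1}^\sim(x)=0$. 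Finally Lemma \ref{lem-corner} iterates \eqref{eq-axa} along the word-length decomposition of $\calH$ to compute every diagonal corner of $\sigma_1(x)$, giving $\sigma_1(x)=\sigma_1(e_1)$, and faithfulness of $\sigma_1$ (Lemma \ref{lem-ess-ffl}, again from GNS-essentiality of $\varphi_2$) concludes $x=e_1$. In short: your multiplicative-domain reduction and your intuition that $(b_i)_i$ ``runs to a boundary point'' are both on target, but as written the proposal reduces the theorem to unproved claims about a nonexistent commutative boundary, so it does not constitute a proof.
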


The assumption on $\varphi_1$ in the theorem is satisfied when $\varphi_1$ is a GNS-essential tracial state by Glimm's lemma.
More generally, one can check the assumption on $\varphi_1$ when the centralizer of $\varphi_1$ is diffuse in a suitable sense (see Lemma \ref{lem-diffuse}). 
Combing the theorem with a modification of Glimm's lemma (Lemma \ref{lem-Ozawa}), told to us by Narutaka Ozawa,
we obtain 

\begin{theorem}\label{thm-nomimi}
If
$\varphi_1$ is faithful,
$\varphi_2$ is GNS-essential,
and $A_1$ contains no non-zero projection $p$ such that $pAp = \lC p$,
then the embedding $A \subset \DT (A, \varphi)$ is rigid.
\end{theorem}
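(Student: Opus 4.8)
The plan is to deduce Theorem~\ref{thm-nomimi} directly from Theorem~\ref{thm-rigid}. Since $\varphi_2$ is assumed GNS-essential, it suffices to produce a net $(b_i)_i$ in $A_1$ with
\[ \limsup_i \varphi_1(b_i b_i^*) \le 1,\qquad \lim_i \varphi_1(b_i^* x b_i) = \varphi_1(x),\qquad \lim_i \varphi_1(x b_i) = 0 \qquad (x \in A_1), \]
which is precisely the content of the Glimm-type Lemma~\ref{lem-Ozawa} applied to $(A_1,\varphi_1)$; once that net is available, Theorem~\ref{thm-rigid} yields at once that every completely positive map $\Phi$ on $\DT(A,\varphi)$ with $\Phi|_A = \id_A$ equals $\id$.

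To carry out Lemma~\ref{lem-Ozawa} I would work in the GNS triple $(\pi,H_1,\Omega)$ of $\varphi_1$. Faithfulness of $\varphi_1$ makes $\pi$ injective, and the hypothesis that $A_1$ has no nonzero projection $p$ with $pA_1p = \lC p$ then forces $\pi(A_1)\cap K(H_1) = \{0\}$: otherwise $\pi(A_1)$ would contain a nonzero finite-rank projection $q$ (a spectral projection of a positive compact), the finite-dimensional corner $q\pi(A_1)q$ would contain a minimal projection $e$ of $\pi(A_1)$, and by injectivity $e = \pi(p)$ for a projection $p \in A_1$ with $pA_1p = \lC p$, a contradiction. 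With the hypothesis of Glimm's lemma in place, the refinement used here produces \emph{positive} elements $a_i \in A_1$ with $\|\pi(a_i)\Omega\| = 1$, with $\pi(a_i)\Omega \to 0$ weakly, and with $\varphi_1(a_i\,x\,a_i) = \langle\pi(x)\pi(a_i)\Omega,\pi(a_i)\Omega\rangle \to \varphi_1(x)$ for every $x \in A_1$. Taking $b_i := a_i$, the second and third displayed conditions then hold (the third because $\varphi_1(xb_i) = \langle\pi(x)\pi(a_i)\Omega,\Omega\rangle \to 0$), while the first holds with equality since $b_i \ge 0$ gives $\varphi_1(b_ib_i^*) = \varphi_1(b_i^*b_i) = \|\pi(a_i)\Omega\|^2 = 1$.

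I expect the main obstacle to be the \emph{positive} version of Glimm's lemma invoked in the previous step: the classical statement furnishes only unit vectors in $H_1$ realizing $\varphi_1$ and escaping weakly to zero, whereas here one needs such vectors of the form $\pi(a_i)\Omega$ with $a_i$ positive in $A_1$, and it is exactly the positivity that delivers the left-hand bound $\limsup_i \varphi_1(b_ib_i^*) \le 1$ for free; this refinement is the lemma communicated by Ozawa. That the hypotheses are genuinely needed is already visible in small cases: if $A_1 = \lC$, or more generally $A_1 = M_n$ with its tracial state, no net as above can exist, for the second condition at $x = 1$ gives $\varphi_1(b_i^*b_i) \to 1$ while testing the third condition against all $x \in A_1$ forces $b_i \to 0$ in $L^2(A_1,\varphi_1)$, hence $\varphi_1(b_i^*b_i) \to 0$ — and these algebras do contain minimal projections. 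When the centralizer of $\varphi_1$ happens to be diffuse one may instead take the $b_i$ to be Haar-type unitaries inside the centralizer, as in Lemma~\ref{lem-diffuse}, for which the first two conditions are exact; Lemma~\ref{lem-Ozawa} is what covers the general case.
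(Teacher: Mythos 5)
Your proposal follows exactly the paper's route: the paper proves Theorem~\ref{thm-nomimi} by applying Lemma~\ref{lem-Ozawa} to $(A_1,\varphi_1)$ to produce the net required by Theorem~\ref{thm-rigid}, with positivity of the $b_i$ giving $\varphi_1(b_ib_i^*)=\varphi_1(b_i^*b_i)\to 1$, which is precisely your argument. Your supplementary observations (that the hypotheses force $\varphi_1$ to be GNS-essential, and that positivity is the essential refinement over plain Glimm) are correct but unnecessary once Lemma~\ref{lem-Ozawa} is invoked; the only point worth flagging is that the theorem's hypothesis is stated as $pAp=\lC p$ whereas applying Lemma~\ref{lem-Ozawa} to $A_1$ uses the condition $pA_1p=\lC p$, a discrepancy already present in the paper itself.
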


By a result in \cite{Hasegawa-BS}, $\DT(A, \varphi)$ is nuclear if and only if so are both $A_1$ and $A_2$.
Thus, this theorem provides new examples of rigid embedding of exact $\rmC^*$-algebras into nuclear ones.
One of key ingredients of our proof is an approximation result (Proposition \ref{prop-freeglimm}) of certain states on $\DT(A, \varphi)$.
This proposition is inspired from a geometric property of compactifications of locally infinite trees
and the proof is based on Glimm's lemma (see the remark after Lemma \ref{lem-Psicpt}).
Using the proposition we also show that $\DT(A, \varphi)$ is simple and purely infinite whenever both $\varphi_1$ and $\varphi_2$ are GNS-essential.
This corresponds to Laca and Spielberg's result for pure infiniteness of crossed products associated to strong boundary actions \cite{Laca-Spielberg}.

\section{preliminaries}\label{section-pre}
Let $(A,E) =(A_1,E_1)\star_D(A_2,E_2)$ be a reduced amalgamated free product with non-degenerate conditional expectations $E_1$ and $E_2$ (see \cite{Voiculescu}).
We may identify the index set $ \{1, 2\}$ with $\lZ/2\lZ$.
We always assume that $A_1 \neq D \neq A_2$.
For any $a \in A_i$, we set $a^\circ :=a - E_i(a)$, which belongs to $A_i^\circ := \ker E_i$.
Then, $\DT(A,E)$ is a $\rmC^*$-algebra generated by $A$ and projections $e_1$ and $e_2$ such that
\[
e_1 +e_2 = 1, \quad \quad e_i a e_i = E_i (a)e_i \quad \text{for }\;i=1,2, \; a \in A_i .
\]
In \cite{Hasegawa-BS} it was shown that $\DT(A,E)$ is universal with respect to the above relations,
but we will not use this universal property in the present paper.

For later purposes, let us recall the construction of $\DT(A, E)$.
We call any element of the form $a_1 a_2 \cdots a_n$ for $a_j \in A_{i_j}^\circ$ with $i_{j} \neq i_{j+1}$, $j =1,\dots,n-1$
a \emph{reduced word of length $n$}.
Recall that the canonical conditional expectation $E_{A_i} \colon A \to A_i$ is given by $E_{A_i} (x) =0$ when $x$ is either in $A_{i+1}^\circ$ or a reduced word of length $n \geq 2$.
It follows that $E_i \circ E_{A_i} = E$.
Let $(L^2(A, E_{A_i}), \phi_i, \eta_i )$ be the GNS representation associated with $E_{A_i}$
and consider the $A$-$A_1 \oplus A_2$ $\rmC^*$-correspondence $(Y,\phi_Y) := (L^2(A, E_{A_1}) \oplus L^2(A, E_{A_2}), \phi_1 \oplus \phi_{2})$.
We denote by $\lL (Y)$ the $\rmC^*$-algebra of adjointable operators on $Y$.
Then, $\DT(A,E)$ is defined to be the $\rmC^*$-subalgebra of $\lL (Y)$ generated by $\phi_Y(A)$ and two projections $P_1$ and $P_2$,
where
$P_1$ is the projection onto the closure of
\[
( \eta_1 A_1  \oplus A_1^\circ A_2^\circ \eta_1 A_1 \oplus A_1^\circ A_2^\circ A_1^\circ A_2^\circ\eta_1 A_1  \oplus \cdots )
 \oplus ( A_1^\circ \eta_2 A_2 \oplus A_1^\circ A_2^\circ A_1^\circ \eta_2 A_2  \oplus \cdots )
\]
and the range of $P_2 = 1 - P_1$ is the closure of
\[
 ( A_2^\circ \eta_1 A_1 \oplus A_2^\circ A_1^\circ A_2^\circ \eta_1 A_1  \oplus \cdots ) 
 \oplus ( \eta_2 A_2   \oplus A_2^\circ A_1^\circ \eta_2 A_2   \oplus A_2^\circ A_1^\circ A_2^\circ A_1^\circ \eta_2 A_2    \oplus \cdots ).
\]
It is easy to check that $P_i^\perp \phi_Y(a) P_i^\perp =\phi_Y(E_i(a)) P_i^\perp$ for $a \in A_i$ and $i=1,2$.
We may omit $\phi_Y$ and set $e_i := 1 - P_i$.

Note that $a e_i = e_{i+1} a e_i$ holds for $a \in A_i^\circ$ and $i=1,2$.
We put $t(a):= a e_i$, and may write $t_i(a)$ when we emphasize that $a$ is in $A_i$.
We observe that $t_i(a)^*t_j (b) = \delta_{i,j} E_i(a^*b) e_{i}$ for $a \in A_i^\circ$, $b\in A_j^\circ$ and $i,j \in \{1,2\}$.
Similarly, for any reduced word $a_1a_2 \cdots a_n$ of length $n$ with $a_n \in A_i$,
we set $t (a_1 a_2 \cdots a_n) := a_1 a_2 \cdots a_n e_{i} = t (a_1) t (a_2) \cdots t (a_n)$.
For each $n,m \geq 1$, we denote by $\calF_{n,m}$ the linear span of all the elements of the form $t (x) z t(y^*)$,
where $x = x_1 x_2 \cdots x_n$ and $y=y_1y_2 \cdots y_m$ are reduced words of length $n$ and $m$, respectively
such that $x_n,y_m \in A_i^\circ$ for some $i$, and $z \in A_{i+1}$.
Similarly, we define $\calF_{n,0}$ to be the linear span of the set of elements of the form $t (x) z e_i$ with
$x =x_1 \cdots x_n$ is a reduced word of length $n$ such that $x_n \in A_i^\circ$ and $z\in A_{i+1}$
and set $\calF_{0,n} := \{ x^* \mid x \in \calF_{n,0}\}$.
Finally, we set $\calF_{0,0} = \{0 \}$ for convenience.
The next proposition is essentially proved in \cite{Hasegawa-BS}, but we give a sketch of the proof for the reader's convenience.

\begin{proposition}\label{thm-str}
The subspace
\[
e_1^\perp A_1 e_1^\perp + e_2^\perp A_2 e_2^\perp +  \lspan \bigcup_{n,m\geq 0} \calF_{n,m}.
\]
is a norm dense $*$-subalgebra of $\DT(A,E)$.
There exists a unique conditional expectation $E_{A_i}^\sim$ from $\DT(A,E)$ onto $A_i$ for $i=1,2$
which extends $E_{A_i} \colon A \to A_i$ and satisfies that
\[
E_{A_i}^\sim (e_{i}^\perp a e_{i}^\perp ) = a  \quad \text{for }\; a \in A_i, \quad \quad
E_{A_i}^\sim (e_{i} x e_{i} ) = 0 \quad \quad \text{for }\; x \in \DT(A, E) ,
\]
and $E_{A_i}^\sim =0$ on $\bigcup_{n,m\geq 0} \calF_{n,m}$.
The direct sum of the GNS representations associated with $E_{A_1}^\sim$ and $E_{A_2}^\sim$ is faithful.
\end{proposition}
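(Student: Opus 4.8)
The plan is to derive everything from the concrete model $\DT(A,E)\subseteq\lL(Y)$, treating the spanning/density assertion first, then the conditional expectations, and finally the faithfulness of the direct sum. For density, write $\calS$ for the displayed subspace of $\DT(A,E)$; I would show that $\calS$ is a $*$-subalgebra containing $A_1$, $A_2$ and the projections $e_1,e_2$, so that $\overline{\calS}$ is a $\rmC^*$-subalgebra containing a generating set for $\DT(A,E)$ and hence equals it. Self-adjointness is immediate from $\calF_{n,m}^*=\calF_{m,n}$ and $(e_i^\perp A_ie_i^\perp)^*=e_i^\perp A_ie_i^\perp$. The generators lie in $\calS$ because $e_i=e_{i+1}^\perp\,1\,e_{i+1}^\perp\in e_{i+1}^\perp A_{i+1}e_{i+1}^\perp$ and, for $a\in A_i$, $a=e_iae_i+e_{i+1}ae_{i+1}+e_{i+1}ae_i+e_iae_{i+1}$ with $e_iae_i=E_i(a)e_i\in e_iA_{i+1}e_i$, $e_{i+1}ae_{i+1}\in e_i^\perp A_ie_i^\perp$, $e_{i+1}ae_i=t_i(a^\circ)\in\calF_{1,0}$, and $e_iae_{i+1}=t_i((a^*)^\circ)^*\in\calF_{0,1}$. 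The real substance, and the step I expect to be the main obstacle, is closure under multiplication: one computes the product of two generating elements $t(x)z\,t(y^*)$ and $t(x')z'\,t(y'^*)$, and the analogous products involving the corner terms $e_i^\perp A_ie_i^\perp$, using only $e_iae_i=E_i(a)e_i$, the relation $ae_i=e_{i+1}ae_i$ for $a\in A_i^\circ$, and $t_i(a)^*t_j(b)=\delta_{i,j}E_i(a^*b)e_i$. Depending on whether the reduced words meeting at the junction are compatible (their extreme letters lying in different subalgebras), they either merge into one longer reduced word that the $t(\cdot)$-notation absorbs, or the product collapses to a shorter term, to a corner element via $e_iae_i=E_i(a)e_i$, or to zero; in every case the result is a finite linear combination of elements of the three listed types. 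This is a routine but somewhat lengthy case analysis.

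For the conditional expectations, I realize $A_i\subseteq\DT(A,E)\subseteq\lL(Y)$ through the vector $\eta_i\in Y$. Since $\langle\eta_i,\eta_i\rangle=E_{A_i}(1)=1$ and $a\mapsto\eta_ia$ is an isometry of $A_i$ into $Y$, the formula $E_{A_i}^\sim(T):=\langle\eta_i,T\eta_i\rangle$ defines a unital completely positive map $\DT(A,E)\to A_i$; using $\phi_Y(a)\eta_i=\eta_ia$ for $a\in A_i$ it is $A_i$-bimodular and restricts to the identity on $A_i$, hence is a conditional expectation onto $A_i$, and clearly $E_{A_i}^\sim(\phi_Y(a))=E_{A_i}(a)$ for $a\in A$. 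Two features of $\eta_i$, read off the explicit description of the ranges of $P_1,P_2$, give the listed formulas: first $P_i\eta_i=\eta_i$, hence $e_i\eta_i=0$; second, for a single letter $c\in A_j^\circ$ the vector $\phi_Y(c)\eta_k$ lies in the range of $P_j=e_j^\perp$, so $e_j\phi_Y(c)\eta_k=0$. From the first, $E_{A_i}^\sim(e_i^\perp ae_i^\perp)=\langle P_i\eta_i,\phi_Y(a)P_i\eta_i\rangle=E_{A_i}(a)=a$ and $E_{A_i}^\sim(e_ixe_i)=\langle e_i\eta_i,xe_i\eta_i\rangle=0$. For $t(x)z\,t(y^*)\in\calF_{n,m}$ with $n\ge1$, writing $t(x)=t(x_1)\cdots t(x_n)$ with $x_1\in A_j^\circ$, the second feature gives $t(x_1)^*\eta_i=e_j\phi_Y(x_1^*)\eta_i=0$, whence $t(x)^*\eta_i=0$ and $E_{A_i}^\sim(t(x)z\,t(y^*))=\langle t(x)^*\eta_i,\phi_Y(z)\,t(y^*)\eta_i\rangle=0$; the case $n=0<m$ follows since $\calF_{0,m}=\calF_{m,0}^*$ and $E_{A_i}^\sim$ is $*$-preserving, while $\calF_{0,0}=\{0\}$. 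Uniqueness is then automatic: a conditional expectation onto $A_i$ satisfying these conditions is already determined on each of the three summands of the dense subspace $\calS$, hence on all of $\DT(A,E)$ by continuity.

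Finally, for faithfulness of $\rho_1\oplus\rho_2$, where $\rho_i$ denotes the GNS representation of $E_{A_i}^\sim$, I identify the GNS Hilbert module $L^2(\DT(A,E),E_{A_i}^\sim)$ with the cyclic submodule $\overline{\DT(A,E)\eta_i}\subseteq Y$ via $\widehat T\mapsto T\eta_i$; this is a right $A_i$-module isometry intertwining the left actions, since $\langle\widehat S,\widehat T\rangle=E_{A_i}^\sim(S^*T)=\langle S\eta_i,T\eta_i\rangle$, so $\rho_i$ is unitarily equivalent to the restriction of the defining faithful representation $\DT(A,E)\subseteq\lL(Y)$ to $\overline{\DT(A,E)\eta_i}$. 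Since $\overline{\phi_Y(A)\eta_i}$ is exactly the $i$-th summand $L^2(A,E_{A_i})$ of $Y$, one has $\overline{\DT(A,E)\eta_1}+\overline{\DT(A,E)\eta_2}\supseteq Y$, so any element of $\DT(A,E)$ annihilated by both $\rho_1$ and $\rho_2$ annihilates all of $Y$ and is therefore zero. The bulk of the effort lies in the multiplication check; the remainder is bookkeeping with the module $Y$.
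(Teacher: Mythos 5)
Your proof is correct and follows essentially the same route as the paper: the conditional expectations are the vector states $\langle\,\cdot\,\eta_i,\eta_i\rangle$ on the concrete model in $\lL(Y)$, density is obtained by checking the displayed subspace is a $*$-algebra containing the generators via the same corner decomposition of $a\in A_i$ and the relation $t_i(a)^*t_j(b)=\delta_{i,j}E_i(a^*b)e_i$, and faithfulness comes from identifying the two GNS representations with the defining representation on $Y$ (your identification of the cyclic submodules $\overline{\DT(A,E)\eta_i}$ just makes explicit what the paper asserts in one line).
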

\begin{proof}

The ucp (unital completely positive) map $E_{A_i}^\sim \colon \DT(A,E) \to A_i$ given by $x \mapsto \i< x \eta_i , \eta_i >$
defines a conditional expectation onto $A_i$ which obviously extends $E_{A_i}$.
Since $e_i^\perp \eta_i = P_i  \eta_i  = \eta_i $ and $e_i\eta_i = P_{i+1} \eta_i  =0$ hold,
it follows that $E_{A_i}^\sim (e_i^\perp a e_i^\perp) =E_{A_i} (a)$ and $E_{A_i}^\sim ( e_i x e_i) =0$ for $a \in A_i$ and $x \in \DT(A,E)$.
It is also not hard to see that $E_{A_i}^\sim$ vanishes on $\bigcup_{n,m} \calF_{n,m}$.
Note that
the inclusion map $\DT(A,E) \hookrightarrow \lL (Y)$ is  nothing but the direct sum of the GNS representations associated with $E_{A_1}^\sim$ and $E_{A_2}^\sim$.

The density of the subspace $\calA := e_1^\perp A_1 e_1^\perp + e_2^\perp A_2 e_2^\perp +  \lspan \bigcup_{n,m\geq 0} \calF_{n,m}$
easily follows from the Cuntz--Pimsner algebra structure of $\DT(A,E)$ (see \cite{Hasegawa-BS}),
but we give here a more direct and elementary proof.
Since $\DT(A,E)$ is the norm closure of the $*$-algebra generated by $A_1,A_2$ and $e_1$,
it suffices to show that $\calA$
is a $*$-algebra containing $A_1,A_2$ and $e_1$.
It is obvious that $e_1 \in \calA$.
For any $a \in A_1$, we have
\[
a = e_2 a e_2 + e_2 a e_1 + e_1 a e_2 + E_1 (a)e_1 = e_2 a e_2 + E_1 (a) e_1 + t (a^\circ ) + t (a^{\circ*} )^*,
\]
which belongs to $\calA$.
Similarly, one has $A_2 \subset \calA$.
To see that $\calA$ is a $*$-algebra,
let  $a \in A_i$ and $b \in A_j^\circ$ be arbitrary elements.
Then, one has $(e_i^\perp a e_i^\perp) t (b) =  \delta_{i,j} e_{i+1}ab e_i = \delta_{i,j}t ( (ab)^\circ) \in \calA$.
Also, $(e_i^\perp a e_i^\perp ) (e_j^\perp b e_j^\perp) = \delta_{i,j} (e_i^\perp ab e_i^\perp  + e_i^\perp a e_i b e_i^\perp) \in \calA$. 
This shows, with the notation $\calF =\lspan \bigcup_{n,m\geq 0} \calF_{n,m}$,
that $e_i^\perp A_i e_i^\perp  \calF \subset \calA$ and $\calF e_i^\perp A_i e_i^\perp \subset \calA $ for $i=1,2$.
Finally, the inclusion $\calF \calF \subset \calA$ follows from the fact that $t(a)^*t(b) = \delta_{i,j} E_i(a^*b) e_i$ for $a \in A_i^\circ$ and $b \in A_j^\circ$.
\end{proof}

\section{Approximation of states}

Let $(A,\varphi) = (A_1, \varphi_1) \star (A_2, \varphi)$ be any reduced free product with non-degenerate states $\varphi_1$ and $\varphi_2$.
Denote by $( \calH, \pi_\varphi, \xi_\varphi)$ the GNS representation associated with $\varphi$.
We will use the following two representations of $\DT(A, \varphi)$ on $\calH$.
For $i=1,2$,
let $\sigma_i \colon \DT(A,E) \to \lB ( \calH)$ be the representation induced from the `$\otimes 1$ map' $\lL (L^2(A, E_{A_i})) \to
\lB (L^2 (A, E_{A_i}) \otimes_{A_i} \calH_i ) \cong \lB (\calH)$,
where $\calH_i$ is the GNS Hilbert space for $\varphi_i$.
We identify $\calH_i$ with $\overline{A_i \xi_\varphi} \subset \calH$.
Note that $\sigma_i = \pi_\varphi$ on $A$.
Thus, to simplify the notation, we omit $\pi_\varphi$ and write $\sigma (a) =a$ for $a \in A$.
To see the range of the projections $\sigma_1(e_1)$ and $\sigma_2 (e_1)$,
let $e_\varphi$ and $P_{i\to}$ denote the projections onto $\lC \xi_\varphi$
and
\[
\calH (i \!\to ) :=\overline{A_i^\circ \xi_\varphi \oplus A_i^\circ A_{i+1}^\circ \xi_\varphi \oplus A_i^\circ A_{i+1}^\circ A_i^\circ \xi_\varphi \oplus \cdots},
\]
respectively.
Notice that $\calH = \lC \xi_\varphi \oplus \calH (1\!\to) \oplus \calH (2\!\to)$.
Then, it follows that
\begin{equation}\label{eq-sigmae}
\sigma_i (e_i^\perp ) = e_\varphi  +  P_{i\to}, \quad \; \sigma_{i+1} (e_i^\perp) = P_{i\to}
\end{equation}
for $i=1,2$.

\begin{lemma}\label{lem-Psicpt}
With the above notion, the following hold true:
\begin{itemize}
\item[(i)] Let $P_{A_i} \in \lB(\calH)$ be the projection onto $\calH_i$.
Then, it follows that
$P_{A_i} \sigma_i(x) P_{A_i} = E_{A_i}^\sim (x) P_{A_i}$ and $P_{A_i}\sigma_{i+1} (x) P_{A_i} - E_{A_i}^\sim (x) P_{A_i} \in \lK (\calH_i)$
for $x \in \DT (A,\varphi)$.
\item[(ii)] Let $\psi_i$ be the state on $\DT(A,E)$ defined by $\psi_i (x) = \i< \sigma_i (x) \xi_\varphi, \xi_\varphi>$ for $x \in \DT(A,\varphi)$.
Then, it follows that $\psi_i = \psi_i(e_i^\perp (\,\cdot\,) e_i^\perp ) = \varphi \circ E_{A_i}^\sim$ and $\psi_i |_A = \varphi$.
\end{itemize}
\end{lemma}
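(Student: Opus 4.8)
The plan is to establish the two identities in~(i) and then read off~(ii) from them; the only genuinely substantive point is the compactness assertion in the second half of~(i).

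For~(i) I would first make $\sigma_i$ explicit. By the proof of Proposition~\ref{thm-str} the inclusion $\DT(A,\varphi)\subset\lL(Y)$ is the direct sum $\rho_1\oplus\rho_2$ of representations with $\rho_i$ acting on $L^2(A,E_{A_i})$ and $\langle\rho_i(x)\eta_i,\eta_i\rangle = E_{A_i}^\sim(x)$, and $\sigma_i=\rho_i\otimes 1$ under the unitary identification $\calH\cong L^2(A,E_{A_i})\otimes_{A_i}\calH_i$ coming from $\varphi=\varphi_i\circ E_{A_i}$, which carries $\xi_\varphi$ to $\eta_i\otimes\xi_{\varphi_i}$ and $\calH_i=\overline{A_i\xi_\varphi}$ to $\overline{A_i\eta_i}\otimes_{A_i}\calH_i$. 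From the defining property of the interior tensor product together with $\varphi|_{A_i}=\varphi_i$ one then gets, for all $y\in\DT(A,\varphi)$,
\[
\langle\sigma_i(y)\xi_\varphi,\xi_\varphi\rangle = \varphi_i\bigl(\langle\rho_i(y)\eta_i,\eta_i\rangle\bigr) = \varphi\bigl(E_{A_i}^\sim(y)\bigr),
\]
which is the assertion $\psi_i=\varphi\circ E_{A_i}^\sim$ of~(ii). Replacing $y$ by $b^*xa$ with $a,b\in A_i$ and using that $E_{A_i}^\sim$ is an $A_i$-bimodule map, the same computation gives $\langle\sigma_i(x)a\xi_\varphi,b\xi_\varphi\rangle = \varphi(b^*E_{A_i}^\sim(x)a) = \langle E_{A_i}^\sim(x)a\xi_\varphi,b\xi_\varphi\rangle$; since $A_i\xi_\varphi$ is dense in $\calH_i$ and both operators vanish on $\calH_i^\perp$ with range in $\calH_i$, this yields the first identity of~(i), $P_{A_i}\sigma_i(x)P_{A_i}=E_{A_i}^\sim(x)P_{A_i}$.

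For the second identity of~(i), the key observation is that $\sigma_i$ and $\sigma_{i+1}$ differ only by compact operators. They agree with $\pi_\varphi$ on $A$, while by \eqref{eq-sigmae} and the decomposition $\calH=\lC\xi_\varphi\oplus\calH(1\!\to)\oplus\calH(2\!\to)$ one has $\sigma_i(e_i^\perp)-\sigma_{i+1}(e_i^\perp)=e_\varphi$, a rank-one (hence compact) projection. The set $\{\,x\in\DT(A,\varphi):\sigma_i(x)-\sigma_{i+1}(x)\in\lK(\calH)\,\}$ is norm-closed and, by the identity $\sigma_i(xy)-\sigma_{i+1}(xy)=\sigma_i(x)\bigl(\sigma_i(y)-\sigma_{i+1}(y)\bigr)+\bigl(\sigma_i(x)-\sigma_{i+1}(x)\bigr)\sigma_{i+1}(y)$, is a subalgebra; since $\DT(A,\varphi)$ is generated as a $\rmC^*$-algebra by $A$ and $e_1$ (see the proof of Proposition~\ref{thm-str}), it is all of $\DT(A,\varphi)$. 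Consequently, by the first identity,
\[
P_{A_i}\sigma_{i+1}(x)P_{A_i}-E_{A_i}^\sim(x)P_{A_i}=P_{A_i}\bigl(\sigma_{i+1}(x)-\sigma_i(x)\bigr)P_{A_i},
\]
and the compression of a compact operator on $\calH$ to the subspace $\calH_i$ is compact on $\calH_i$, which is the claim.

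The remaining assertions of~(ii) are now immediate: $\psi_i|_A=\varphi$ because $\sigma_i|_A=\pi_\varphi$ (equivalently, because $E_{A_i}^\sim$ extends $E_{A_i}$ and $\varphi\circ E_{A_i}=\varphi$), and $\psi_i(e_i^\perp\,\cdot\,e_i^\perp)=\psi_i$ because $\sigma_i(e_i^\perp)=e_\varphi+P_{i\to}$ is a projection fixing $\xi_\varphi$ (since $\xi_\varphi\in\lC\xi_\varphi$ and $\xi_\varphi\perp\calH(i\!\to)$), so that $\psi_i(e_i^\perp xe_i^\perp)=\langle\sigma_i(x)\sigma_i(e_i^\perp)\xi_\varphi,\sigma_i(e_i^\perp)\xi_\varphi\rangle=\langle\sigma_i(x)\xi_\varphi,\xi_\varphi\rangle$. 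I expect no deep obstacle here: the content lies entirely in the single structural observation $\sigma_i(e_i^\perp)-\sigma_{i+1}(e_i^\perp)=e_\varphi$, together with the routine bookkeeping needed to realize $\calH$, $\xi_\varphi$, $P_{A_i}$ and $\sigma_i$ concretely inside $L^2(A,E_{A_i})\otimes_{A_i}\calH_i$ for the computation in the first identity of~(i).
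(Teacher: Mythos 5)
Your proposal is correct and follows essentially the same route as the paper: the first identity of (i) is read off from the definition of $E_{A_i}^\sim$ as the $\eta_i$-compression, the compactness statement comes from the single observation $\sigma_i(e_i^\perp)-\sigma_{i+1}(e_i^\perp)=e_\varphi\in\lK(\calH)$ propagated to all of $\DT(A,\varphi)$, and (ii) follows from $\sigma_i(e_i^\perp)\xi_\varphi=\xi_\varphi$. You merely fill in a few steps the paper leaves implicit (the explicit tensor-product realization and the closed-subalgebra argument for $\sigma_i-\sigma_{i+1}$ mapping into the compacts).
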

\begin{proof}
By symmetry, we may assume that $i=1$.
We prove (i):
The first assertion immediately follows from the definition of $E_{A_i}^\sim$ (see Proposition \ref{thm-str}).
Since $\sigma_1(e_1) - \sigma_2(e_1) =-e_\varphi$ is compact,
every element $x \in \DT(A, \varphi)$ satisfies that $\sigma_1 (x) - \sigma_2 (x) \in \lK(\calH)$.
In particular, its corner $E_{A_1}^\sim (x) P_{A_1} - P_{A_1} \sigma_2 (x) P_{A_1} $ belongs to $\lK (\calH_1)$.
We prove (ii):
Since $\sigma_1 (e_1^\perp ) = e_\varphi + P_{1\to} \geq P_{A_1}$,
we have $\xi_\varphi = \sigma_1 (e_1^\perp ) \xi_\varphi = P_{A_1}\xi_\varphi$, implying that $\psi_1 = \psi_1 (e_1^\perp (\,\cdot\, )e_1^\perp ) = \varphi \circ E_{A_1}^\sim$.
Since $\sigma_1 = \sigma_2 = \pi_\varphi$ holds on $A$, it follows that $\psi_1 = \psi_2 = \varphi$ on $A$.
\end{proof}

When the reduced free product $(A,\varphi)$ comes from the reduced group $\rmC^*$-algebra
of a free product group $\Gamma = \Gamma_1 \ast \Gamma_2$,
the $\rmC^*$-algebra $\DT(A,\varphi)$ is identified with $C(\DT) \rtimes_{\rm r} \Gamma_1 \ast \Gamma_2$,
where $\DT$ is the compactification of the Bass--Serre tree associated with $\Gamma_1 \ast \Gamma_2$ (see \cite{Hasegawa-BS}).
In this case, the state $\psi_i$ is the composition of the canonical conditional expectation $C(\DT) \rtimes_{\rm r} \Gamma_1 \ast \Gamma_2 \to C(\DT)$
and the evaluation map $\delta_{\Gamma_i} \colon  C(\DT)  \to \lC; f \mapsto f (\Gamma_i)$.
Here $\Gamma_i$ is viewed as an element in $\bfT = \Gamma/\Gamma_1 \sqcup \Gamma/\Gamma_2$.
When $\Gamma_1$ is infinite and $(g_n)_n$ is any sequence of mutually distinct elements in $\Gamma_1$,
it follows from the definition of the topology of $\DT$ that $\lim_{n \to \infty} g_n \Gamma_2 = \Gamma_1$.
This shows that $\psi_2 (g_n^{-1} \cdot  g_n )$ converges to $\psi_1$ in the weak$^*$-topology. 
We will prove an analogues result below for reduced free products with respect to GNS-essential states
based on Glimm's lemma.
The following easily follows from Glimm's lemma (see, e.g. \cite[Theorem 1.4.11]{Brown-Ozawa}).
\begin{lemma}[Glimm's lemma]\label{lem-Glimm}
Let $\phi$ be a GNS-essential state on a unital $\rmC^*$-algebra $B$.
Then, there exists a net $(a_i)_i$ in $\ker \phi$ such that
$\phi (a_i^*a_i) =1$ for $i$, $\lim_i \phi (x a_i) =0$
and $\lim_{i} \phi (a_i^*xa_i) = \phi (x)$ for any $x \in B$.
\end{lemma}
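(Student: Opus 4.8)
The plan is to read the net off directly from the vector-state form of Glimm's lemma, after transporting the problem into the GNS representation of $\phi$, and then to correct the resulting elements so that they land in $\ker\phi$ and satisfy $\phi(a_i^*a_i)=1$ exactly. So fix the GNS triple $(\calH_\phi,\pi_\phi,\xi_\phi)$ of $\phi$. Since $\phi$ is GNS-essential, $\pi_\phi(B)$ is a unital $\rmC^*$-subalgebra of $\lB(\calH_\phi)$ with $\pi_\phi(B)\cap\lK(\calH_\phi)=\{0\}$, and $\pi_\phi(B)\xi_\phi$ is dense in $\calH_\phi$.

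The key observation is that the three conditions required merely encode a geometric statement about unit vectors in $\calH_\phi$: for $a\in B$, putting $\eta:=\pi_\phi(a)\xi_\phi$, one has $\phi(a^*a)=\|\eta\|^2$, $\phi(xa)=\i<\eta,\pi_\phi(x^*)\xi_\phi>$ and $\phi(a^*xa)=\i<\pi_\phi(x)\eta,\eta>$ for every $x\in B$. Because the vectors $\pi_\phi(x^*)\xi_\phi$ $(x\in B)$ span a dense subspace and the $\eta_i$ have norm one, the requirements $\phi(xa_i)\to0$ and $\phi(a_i^*xa_i)\to\phi(x)$ for all $x$ say precisely that $\eta_i:=\pi_\phi(a_i)\xi_\phi$ converges weakly to $0$ and that $\i<\pi_\phi(\,\cdot\,)\eta_i,\eta_i>$ converges weak$^*$ to $\i<\pi_\phi(\,\cdot\,)\xi_\phi,\xi_\phi>=\phi$ on $\pi_\phi(B)$; moreover $a_i\in\ker\phi$ is the condition $\i<\eta_i,\xi_\phi>=0$, which we shall only arrange at the end.

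Now apply Glimm's lemma in the form of \cite[Theorem 1.4.11]{Brown-Ozawa} to $\pi_\phi(B)\subseteq\lB(\calH_\phi)$ and the vector state $\i<\,\cdot\,\xi_\phi,\xi_\phi>$: for every finite $F\subseteq B$, finite-dimensional subspace $V\subseteq\calH_\phi$ and $\varepsilon>0$ there is a unit vector $\zeta$ orthogonal to $V$ with $|\i<\pi_\phi(x)\zeta,\zeta>-\phi(x)|<\varepsilon$ for $x\in F$. Ordering the triples $(F,V,\varepsilon)$ in the evident way yields a net $(\zeta_i)$ of unit vectors with $\zeta_i\to0$ weakly and $\i<\pi_\phi(x)\zeta_i,\zeta_i>\to\phi(x)$ for all $x\in B$. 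Since $\pi_\phi(B)\xi_\phi$ is dense in $\calH_\phi$, a routine refinement of the net---recording, besides $\zeta_i$, a finite set of test vectors and an approximation parameter---replaces each $\zeta_i$ by a vector $\pi_\phi(b_i)\xi_\phi$ with $b_i\in B$ and $\|\pi_\phi(b_i)\xi_\phi-\zeta_i\|\to0$; by the identities above, $\phi(b_i^*b_i)\to1$, $\phi(xb_i)\to0$ and $\phi(b_i^*xb_i)\to\phi(x)$ for every $x\in B$, and in particular $\phi(b_i)\to0$.

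Finally, set $\lambda_i:=\phi\bigl((b_i-\phi(b_i)1)^*(b_i-\phi(b_i)1)\bigr)^{1/2}$ and $a_i:=\lambda_i^{-1}(b_i-\phi(b_i)1)$; this makes sense for large $i$ since $\lambda_i^2=\phi(b_i^*b_i)-|\phi(b_i)|^2\to1$. By construction $a_i\in\ker\phi$ and $\phi(a_i^*a_i)=1$, and expanding $(b_i-\phi(b_i)1)^*x(b_i-\phi(b_i)1)$ and using $\phi(b_i)\to0$, $\phi(xb_i)\to0$, $\phi(b_i^*x)=\overline{\phi(x^*b_i)}\to0$ and $\lambda_i\to1$ gives $\phi(a_i^*xa_i)\to\phi(x)$ and $\phi(xa_i)\to0$ for all $x\in B$, as required. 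The only step needing genuine care is the refinement producing vectors of the form $\pi_\phi(b_i)\xi_\phi$ out of the abstract weakly null unit vectors supplied by Glimm's lemma while keeping weak nullity against all of $\calH_\phi$; the rest is bookkeeping.
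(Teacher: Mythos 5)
Your proposal is correct and follows essentially the same route as the paper, which simply invokes Glimm's lemma in the form of \cite[Theorem 1.4.11]{Brown-Ozawa} applied to $\pi_\phi(B)\subseteq\lB(\calH_\phi)$ (where GNS-essentiality makes the hypothesis $\phi|_{\pi_\phi(B)\cap\lK(\calH_\phi)}=0$ vacuous) and leaves the transfer to elements of $B$ unwritten. Your write-up supplies exactly the omitted bookkeeping — approximating the weakly null unit vectors by $\pi_\phi(b_i)\xi_\phi$ via cyclicity and then centering and normalizing to land in $\ker\phi$ with $\phi(a_i^*a_i)=1$ — and each of those steps checks out.
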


\begin{proposition}\label{prop-freeglimm}
Assume that $\varphi_1$ is GNS-essential and let $(a_i)_i$ be a net in $A_1^\circ$ as in Lemma \ref{lem-Glimm}.
Then, for any $x \in \DT(A,\varphi)$ it follows that $\lim_{i}\| e_1 a_i^* x a_i e_1 - \psi_1 (x) e_1 \| =0$.
In particular, $\lim_i  \psi_1 (b^*a_i^* x a_ic )= \psi_1 (x) \varphi (b^*c )$ holds for any $b,c \in A_2^\circ$.
\end{proposition}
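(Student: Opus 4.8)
The plan is to prove the norm-convergence on the dense $*$-subalgebra $\calA$ of Proposition~\ref{thm-str} and then extend it by continuity. First observe that, since $a_i\in A_1^\circ$, the identity $a_ie_1=e_2a_ie_1$ gives $e_1a_i^*xa_ie_1=t(a_i)^*\,x\,t(a_i)$ with $t(a_i)=a_ie_1$, and $t(a_i)^*t(a_i)=\varphi_1(a_i^*a_i)e_1=e_1$, so $\|t(a_i)\|=1$. Hence the maps $x\mapsto e_1a_i^*xa_ie_1$ and $x\mapsto\psi_1(x)e_1$ are contractions, and it suffices to treat $x$ in $\calA$; by linearity this reduces to the three cases $x\in e_1^\perp A_1e_1^\perp$, $x\in e_2^\perp A_2e_2^\perp$, and $x\in\calF_{n,m}$.

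For $x=e_1^\perp ae_1^\perp$ with $a\in A_1$: since $e_1^\perp=e_2$ (recall $e_j=1-P_j$ and $P_1+P_2=1$) one has $e_1^\perp t(a_i)=e_2a_ie_1=a_ie_1=t(a_i)$, so both outer projections are absorbed and $e_1a_i^*xa_ie_1=t(a_i)^*\,a\,t(a_i)=e_1(a_i^*aa_i)e_1=\varphi_1(a_i^*aa_i)e_1$, which by Glimm's lemma (Lemma~\ref{lem-Glimm}) converges in norm to $\varphi_1(a)e_1$; moreover $\varphi_1(a)e_1=\psi_1(e_1^\perp ae_1^\perp)e_1$ because $\varphi|_{A_1}=\varphi_1$, $\psi_1=\varphi\circ E_{A_1}^\sim$, and $E_{A_1}^\sim(e_1^\perp ae_1^\perp)=a$. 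For $x=e_2^\perp be_2^\perp=e_1be_1$ with $b\in A_2$: the relation $e_1a_i^*e_1=\varphi_1(a_i^*)e_1=0$ forces $e_1a_i^*xa_ie_1=0$, which matches $\psi_1(e_1be_1)=0$.

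The remaining case $x\in\calF_{n,m}$, on which $\psi_1$ vanishes, is the crux, and I expect the obstacle to be that a direct symmetric conjugation is unwieldy, because the corners $e_1A_2e_1$ and $e_2A_1e_2$ do not reduce to scalars. The device is to estimate one factor at a time. For a reduced word $v=v_1\cdots v_k$ of length $k\ge1$ whose first letter $v_1$ has index $j_1$, the identities $t(a_i)^*t(v_1)=\delta_{1,j_1}\varphi_1(a_i^*v_1)e_1$ and $e_1t_2(b)=t_2(b)$ (for $b\in A_2^\circ$) give $t(a_i)^*t(v)=0$ when $j_1=2$ and $t(a_i)^*t(v)=\varphi_1(a_i^*v_1)\,t(v_2\cdots v_k)$ when $j_1=1$ (reading $t$ of the empty word as $e_1$); in either case $\|t(a_i)^*t(v)\|\to0$, since $\varphi_1(a_i^*v_1)=\overline{\varphi_1(v_1^*a_i)}\to0$ by Glimm's lemma. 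Hence, for $(n,m)\neq(0,0)$: if $n\ge1$, every spanning element of $\calF_{n,m}$ has the form $t(x')\,w$ with $x'$ a reduced word of length $n$ and $w$ independent of $i$, so $\|t(a_i)^*x\|\to0$; if $n=0$, then $m\ge1$ and every spanning element has the form $w'\,t(y')^*$ with $y'$ a reduced word of length $m$, so $\|xt(a_i)\|\le\|w'\|\,\|t(a_i)^*t(y')\|\to0$; either way $e_1a_i^*xa_ie_1=t(a_i)^*xt(a_i)\to0$, using $\|t(a_i)\|=1$. Since $\calF_{0,0}=\{0\}$ and $\psi_1$ annihilates every $\calF_{n,m}$, this settles the third case. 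Assembling the three cases on $\calA$, the uniform bound from the first paragraph extends the limit to all of $\DT(A,\varphi)$, which is the first assertion.

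For the final statement I would pass to the representation $\sigma_1$, which is the identity on $A$ and has $\sigma_1(e_1)=P_{2\to}$ by~\eqref{eq-sigmae}. Expanding, $\psi_1(b^*a_i^*xa_ic)=\langle\sigma_1(x)\,a_ic\xi_\varphi,\,a_ib\xi_\varphi\rangle=\langle a_i^*\sigma_1(x)a_i\,c\xi_\varphi,\,b\xi_\varphi\rangle$, and since $b\xi_\varphi,c\xi_\varphi\in\calH(2\!\to)$ lie in the range of $P_{2\to}$, inserting $P_{2\to}=\sigma_1(e_1)$ on both sides of $a_i^*\sigma_1(x)a_i$ rewrites this as $\langle\sigma_1(e_1a_i^*xa_ie_1)\,c\xi_\varphi,\,b\xi_\varphi\rangle$. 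By the first part, $\sigma_1(e_1a_i^*xa_ie_1)\to\psi_1(x)P_{2\to}$ in norm, so the inner product tends to $\psi_1(x)\langle c\xi_\varphi,b\xi_\varphi\rangle=\psi_1(x)\varphi(b^*c)$, as required.
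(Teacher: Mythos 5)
Your proof is correct and follows essentially the same route as the paper: reduce to the dense subalgebra of Proposition \ref{thm-str} via the uniform bound $\|t(a_i)\|=1$, handle $e_1^\perp A_1 e_1^\perp$ by Glimm's lemma, kill $e_2^\perp A_2 e_2^\perp$ and each $\calF_{n,m}$ using $t(a_i)^*t_j(v_1)=\delta_{1,j}\varphi_1(a_i^*v_1)e_1\to 0$, and deduce the second assertion by sandwiching with $e_1$. The only cosmetic difference is that for the last step the paper inserts $e_1$ algebraically (via $e_2b^*=e_2b^*e_1$ and $\psi_1=\psi_1(e_1^\perp\,\cdot\,e_1^\perp)$) whereas you insert $\sigma_1(e_1)=P_{2\to}$ spatially; both are fine.
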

\begin{proof}
Set $\theta_i (x) = e_1 a_i^* x a_i e_1 = t_1 (a_i)^* x t _1 (a_i)$ for $x \in \DT(A,\varphi )$.
Since $\theta_i$ has the norm $\|\theta_i (1)\| =\varphi (a_i^*a_i) =1$,
it suffices to show the desired convergence on the dense subset $e_1^\perp A_1e_1^\perp + e_2^\perp A_2 e_2^\perp  + \bigcup_{n,m \geq 1}\calF_{n,m}$
thanks to Proposition \ref{thm-str}.
For any $x \in A_1$, Lemma \ref{lem-Psicpt} (ii) shows that
$\psi_1 (e_1^\perp x e_1^\perp ) = \psi_1 (x) =  \varphi_1 (x)$.
Since $e_1 a_i^* e_2 = e_1 a_i^*$ holds, 
we have
\[
\theta_i (e_1^\perp x e_1^\perp ) = e_1 a_i^* e_2 x e_2 a_i e_1 = e_1 a_i^*x a_i e_1 = \varphi_1 (a_i^* x a_i ) e_1,
\]
which converges to $\varphi_1 (x) e_1$.
By Proposition \ref{thm-str} again,
we observe that $\psi_1 = \varphi \circ E_{A_1}^\sim$ vanishes on $e_2^\perp A_2 e_2^\perp$ and $\calF_{n,m}$ for $n,m \geq 0$.
On the other hand, $\theta_i$'s also vanish on $e_2^\perp \DT(A,\varphi )e_2^\perp$ since $t_1 (a_1) = e_2t_1 (a_1)$.
Thus, to see the first assertion, it is enough to show that $\lim_i \|\theta_i (x)\| =0$ for any $x \in \calF_{n,m}$, $n,m \geq 0$.
This follows from the observation that
for any $x \in A_j^\circ$, the norms $\| t_1^* (a_i) t_j (x) \| =\delta_{1,j} |\varphi_1 (a_i^* x) |$
and $\|t_j(x)^* t_1 (a_i) \| =\delta_{1,j} | \varphi_1 (x^* a_i) |$.

To see the second assertion, let $b,c \in A_2^\circ$ be arbitrary elements.
It follows from Lemma \ref{lem-Psicpt} (ii) that
$\psi_1 (b^*a_i^* x a_i c ) = \psi_1 (e_2 b^* a_i^* x a_i c e_2 ) = \psi_1 (e_2 b^* e_1 a_i^* x a_i e_1 c e_2) \longrightarrow \varphi (b^* c) \psi_1 (x)$.
\end{proof}

\begin{lemma}\label{lem-ess-ffl}
If $\varphi_1$ is GNS-essential,
then $\sigma_2$ is faithful.
\end{lemma}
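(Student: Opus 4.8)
The plan is to show that the closed two-sided ideal $J := \ker \sigma_2$ of $\DT(A,\varphi)$ is annihilated by both of the conditional expectations $E_{A_1}^\sim$ and $E_{A_2}^\sim$, and then to conclude $J = \{0\}$ from the fact, recorded in Proposition \ref{thm-str}, that the direct sum of the GNS representations of $E_{A_1}^\sim$ and $E_{A_2}^\sim$ (which is precisely the inclusion $\DT(A,\varphi) \hookrightarrow \lL(Y)$) is faithful.

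To see that $E_{A_i}^\sim$ vanishes on $J$, fix $x \in J$ and compress the identity $\sigma_2(x) = 0$ to the two subspaces $\calH_1,\calH_2$ of $\calH$. Since $\varphi|_{A_i} = \varphi_i$, the subspace $\calH_i = \overline{A_i \xi_\varphi}$ is a copy of the GNS space of $\varphi_i$ and carries the GNS representation $\pi_{\varphi_i}$ of $A_i$. The first identity of Lemma \ref{lem-Psicpt}(i) applied with $i = 2$ gives $\pi_{\varphi_2}(E_{A_2}^\sim(x)) = P_{A_2}\sigma_2(x)P_{A_2}|_{\calH_2} = 0$, while the second clause applied with $i = 1$ shows that $\pi_{\varphi_1}(E_{A_1}^\sim(x))$ differs from $P_{A_1}\sigma_2(x)P_{A_1}|_{\calH_1} = 0$ by a compact operator, hence lies in $\lK(\calH_1)$. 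This is the one place the hypothesis is used: as $\varphi_1$ is GNS-essential, $\pi_{\varphi_1}(A_1)$ contains no non-zero compact operator, so $\pi_{\varphi_1}(E_{A_1}^\sim(x)) = 0$. Because $\varphi_1$ and $\varphi_2$ are non-degenerate, $\pi_{\varphi_1}$ and $\pi_{\varphi_2}$ are faithful, and therefore $E_{A_1}^\sim(x) = 0 = E_{A_2}^\sim(x)$ for every $x \in J$.

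Finally, since $J$ is an \emph{ideal}, the conclusion $E_{A_i}^\sim|_J = 0$ applies not just to $x$ but to every element $z^* x^* x z$ with $z \in \DT(A,\varphi)$; as these are exactly the $A_i$-valued inner products obtained by applying the GNS representation of $E_{A_i}^\sim$ to $x$ on a dense submodule, that representation kills $x$, for $i = 1,2$. Hence $J$ is contained in the kernel of the direct sum of the two GNS representations, which is $\{0\}$ by Proposition \ref{thm-str}, and so $\sigma_2$ is faithful. The computation is short once Lemma \ref{lem-Psicpt} and Proposition \ref{thm-str} are in hand; the step needing the most care is the last one, where it is essential that $\ker\sigma_2$ be an ideal (so that the vanishing of $E_{A_i}^\sim$ on it upgrades to the vanishing of the whole GNS representation), and where one must keep separate the two roles of $\calH_i$ — as the GNS space of the state $\varphi_i$ on which the compressed operators live, and as a subspace of the GNS space $\calH$ of $\varphi$ on which $\sigma_2$ acts.
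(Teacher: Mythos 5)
Your proof is correct, but it takes a genuinely different route from the paper's. The paper argues by contradiction: for a positive norm-one $x\in\ker\sigma_2$ it notes that $\sigma_1(x)=\sigma_1(x)-\sigma_2(x)$ is compact, picks $y\in A$ with $\psi_1(y^*xy)=1$, and then plays the state-approximation result (Proposition \ref{prop-freeglimm}, hence Glimm's lemma for $\varphi_1$) against the fact that the compact operator $\sigma_1(y^*xy)$ is killed in the limit along the weakly null vectors $a_ib\xi_\varphi$. You instead show directly that the ideal $J=\ker\sigma_2$ is annihilated by both conditional expectations $E_{A_1}^\sim$ and $E_{A_2}^\sim$ --- using the exact corner identity of Lemma \ref{lem-Psicpt}(i) with $i=2$ and the compact-perturbation clause with $i=1$ together with GNS-essentiality of $\varphi_1$ and faithfulness of $\pi_{\varphi_1},\pi_{\varphi_2}$ --- and then upgrade, via the ideal property and the elements $z^*x^*xz$, to the vanishing of $x$ in both GNS representations of $E_{A_i}^\sim$, whose direct sum is faithful by Proposition \ref{thm-str}. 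Your argument is shorter and more elementary: it bypasses Proposition \ref{prop-freeglimm} and Glimm's lemma entirely and makes transparent that the only role of GNS-essentiality is to rule out a compact image of $E_{A_1}^\sim(x)$; the paper's argument, while heavier here, reuses the same approximation machinery that drives the pure-infiniteness theorem and the main rigidity proof. The one step you rightly flag as delicate --- passing from $E_{A_i}^\sim|_J=0$ to the vanishing of the GNS representations on $J$ --- is handled correctly by your use of the elements $z^*x^*xz\in J$.
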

\begin{proof}
Suppose that $\sigma_2$ is not faithful
and take a positive element $x \in \ker \sigma_2 $ of norm-one.
By Lemma \ref{lem-Psicpt} (i) $\sigma_1(x)$ is in $\lK (\calH)$.
Since $\sigma_1 \oplus \sigma_2$ is faithful, there exists $y \in A$ such that $\psi_1 (y^*x y ) =1$.
Let $(a_i)_i$ be as in Lemma \ref{lem-Glimm} for $\varphi_1$
and $b \in A_2^\circ$ be such that $\varphi (b^*b) =1$.
Then, Proposition \ref{prop-freeglimm} implies that $\lim_{i} \psi_1 (b^*a_i^* y^*xy a_i b ) = \psi_1 (y^*x y) =1$.
However, since $\sigma_1 (y^*xy)$ is also in $\lK (\calH)$ and $(a_i b \xi_\varphi )_i$ converges to 0 weakly,
we have $|\psi_1 (b^*a_i^* y^*xy a_i b ) | \leq \|\sigma_1 (y^*x y) a_i b \xi_\varphi \| \longrightarrow 0$, a contradiction.
\end{proof}

\begin{theorem}
If $\varphi_1$ and $\varphi_2$ are GNS-essential,
then $\DT(A,\varphi)$ is purely infinite and simple.
\end{theorem}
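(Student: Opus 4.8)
The plan is to verify the well-known criterion that a unital $C^*$-algebra $B\neq\lC$ is simple and purely infinite if and only if for every nonzero positive $b\in B$ there exists $z\in B$ with $z^*bz=1$. Accordingly, fix a nonzero positive $x\in\DT(A,\varphi)$ normalized so that $\|x\|=1$; it suffices to produce $z$ with $\|z^*xz-1\|<1$, since replacing $z$ by $z(z^*xz)^{-1/2}$ then gives $z^*xz=1$, and this already forces $\DT(A,\varphi)$ to be simple. Two ingredients are needed. The dynamical one conjugates $x$ into a norm-small perturbation of a scalar multiple of the projection $e_2$, using Proposition \ref{prop-freeglimm}. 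The algebraic one is an isometry $v\in\DT(A,\varphi)$ with $v^*v=1$ and $vv^*\le e_2$; conjugating by $v$ then turns $\lambda e_2$ back into $\lambda\cdot 1$.

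For the dynamical step, recall that $\sigma_2$ is faithful by Lemma \ref{lem-ess-ffl} (this is where GNS-essentiality of $\varphi_1$ enters), hence isometric. Since $\psi_2(\,\cdot\,)=\langle\sigma_2(\,\cdot\,)\xi_\varphi,\xi_\varphi\rangle$ and $A\xi_\varphi$ is dense in $\calH$, for every $\varepsilon>0$ I may pick $y\in A$ with $\varphi(y^*y)=1$ and $\psi_2(y^*xy)>1-\varepsilon$. Since $\varphi_2$ is also GNS-essential, Proposition \ref{prop-freeglimm} with the roles of the indices $1$ and $2$ interchanged supplies a net $(b_j)_j$ in $A_2^\circ$ such that $\|t_2(b_j)^*w\,t_2(b_j)-\psi_2(w)e_2\|\to 0$ for all $w\in\DT(A,\varphi)$. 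Applying this with $w=y^*xy$ yields an index $j$ with $\|w_1-\lambda e_2\|<\varepsilon$, where $w_1:=t_2(b_j)^*(y^*xy)t_2(b_j)\ge 0$ and $\lambda:=\psi_2(y^*xy)\in(1-\varepsilon,1]$.

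For the algebraic step, I set $v:=t_1(a)+t_1(a')t_2(b)$, where $b\in A_2^\circ$ has $\varphi_2(b^*b)=1$ and $a,a'\in A_1^\circ$ are chosen with $\varphi_1(a^*a)=\varphi_1(a'^*a')=1$ and $\varphi_1(a^*a')=0$; such a $\varphi_1$-orthonormal pair exists because GNS-essentiality of $\varphi_1$ makes $\calH_1$ infinite-dimensional. Using $t_i(a)^*t_j(b)=\delta_{i,j}E_i(a^*b)e_i$ and the identities $t_1(a)=e_2t_1(a)$, $t_2(b)=e_1t_2(b)$ (consequences of $ae_i=e_{i+1}ae_i$), a direct computation shows that all cross terms vanish --- some because $e_1e_2=0$, the rest because $t_1(a)^*t_1(a')=\varphi_1(a^*a')e_1=0$ --- so that $v^*v=e_1+e_2=1$ and $vv^*=t_1(a)t_1(a)^*+t_1(a')t_2(b)t_2(b)^*t_1(a')^*\le e_2$, the two summands being mutually orthogonal projections dominated by $e_2$. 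In particular $e_2v=v$, hence $v^*e_2v=v^*v=1$, and therefore $\|v^*w_1v-\lambda\cdot 1\|=\|v^*(w_1-\lambda e_2)v\|<\varepsilon$. With $z:=y\,t_2(b_j)\,v$ we have $v^*w_1v=z^*xz$, so $\|z^*xz-1\|<\varepsilon+(1-\lambda)<2\varepsilon$; choosing $\varepsilon<1/2$ completes the argument. (That $\DT(A,\varphi)\neq\lC$ is clear since $A_1\subset\DT(A,\varphi)$ and $A_1\neq\lC$.)

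Since the dynamical step is essentially Proposition \ref{prop-freeglimm}, I expect the isometry $v$ to be the main obstacle. A priori $\DT(A,\varphi)$ is a Cuntz--Pimsner-type algebra with no evident supply of projections, and the key observation is that the off-diagonal generators $t_i(\,\cdot\,)$ implement Murray--von Neumann subequivalences between the corners $e_1\DT(A,\varphi)e_1$ and $e_2\DT(A,\varphi)e_2$, while GNS-essentiality of $\varphi_1$ furnishes exactly the orthogonality $\varphi_1(a^*a')=0$ needed to splice two such subequivalences into one isometry onto a sub-projection of $e_2$. This is the algebraic shadow of the Laca--Spielberg mechanism ``strong boundary action $\Rightarrow$ purely infinite simple crossed product'', with the $t_i$ playing the role of the group elements carrying one open set inside another. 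Simplicity alone is cheaper: the estimate $\|w_1-\lambda e_2\|<\varepsilon<\lambda$ already exhibits $e_2$ in the closed ideal generated by $x$, and the symmetric computation (using that $\sigma_1$ is faithful) exhibits $e_1$ there too, whence that ideal is all of $\DT(A,\varphi)$.
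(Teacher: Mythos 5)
Your proof is correct, and while it shares the paper's dynamical engine --- Proposition \ref{prop-freeglimm} applied (by symmetry) to a Glimm net in $A_2^\circ$, together with faithfulness of $\sigma_2$ from Lemma \ref{lem-ess-ffl} --- you finish by a genuinely different route. The paper conjugates $x$ over both indices at once, taking $w = y_1t_1(a_i)+y_2t_2(b_j)$ so that the two diagonal terms approximate $e_1$ and $e_2$ and sum to $1$; this forces an extra estimate showing that the cross terms $t_1(a_i)^*y_1^*xy_2\,t_2(b_j)$ vanish asymptotically, which rests on the decomposition of Proposition \ref{thm-str} and a rerun of the argument of Proposition \ref{prop-freeglimm} on $\ospan\bigcup_{n,m\geq 1}\calF_{n,m}$. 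You instead conjugate only once, landing near $\lambda e_2$, and then convert $e_2$ into $1$ with the explicit isometry $v=t_1(a)+t_1(a')t_2(b)$: the computation $v^*v=e_1+e_2=1$ and $e_2v=v$ is right (the cross terms die because $e_1e_2=0$ and $\varphi_1(a^*a')=0$), and the required $\varphi_1$-orthonormal pair in $A_1^\circ$ does exist since GNS-essentiality forces $\calH_1$ to be infinite dimensional. Your route trades the cross-term estimate for the Murray--von Neumann subequivalence $1\precsim e_2$, which has the side benefit of exhibiting the properly infinite character of $\DT(A,\varphi)$ in a single element; the paper's route needs no isometry but does need the extra limit on the off-diagonal part. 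Both arguments invoke the two GNS-essentiality hypotheses in essentially the same places, and both reduce to the same standard criterion $\|z^*xz-1\|<1$.
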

\begin{proof}
Fix a positive element $x$ of norm-one in $\DT(A,\varphi)$.
We will show that there exists $w \in \DT(A,\varphi)$ such that $\| w^*xw - 1 \| < 1$.
By the previous lemma, $\sigma_1$ and $\sigma_2$ are both faithful.
Take $y_1,y_2 \in A$ so that $\psi_1 (y_1^* x y_1 ) = \psi_2 (y_2^*x y_2 ) = 1$.
Let $(a_i)_i$ and $(b_j)_j$ be nets in $A_1^\circ$ and $A_2^\circ$ obtained by Lemma \ref{lem-Glimm} for $\varphi_1$ and $\varphi_2$, respectively.
Then, by Proposition \ref{prop-freeglimm} there exist $i,j$ such that $\| e_1 a_i^* y_1^* x y_1 a_i e_1 - e_1 \| + \| e_2 b_j^* y_2^* x y_2 b_j e_2 - e_2 \| <1/2$.
We observe that $e_2 y_1^* x y_2 e_1$ and $e_1 y_2^* x y_1 e_1$ are in $\ospan \bigcup_{n,m \geq 0} \calF_{n,m}$ (see Proposition \ref{thm-str}).
By the same argument in the proof of Proposition \ref{prop-freeglimm},
one can check that $\lim_{i,j} \|t_1 (a_i)^* z  t_2 (b_j) \| =0$ for $z \in \ospan \bigcup_{n,m\geq 1} \calF_{n,m}$.
Thus, we may assume that $ \| t_1 (a_i)^* y_1^*x y_2  t_2 (b_j) \| + \| t_2 (b_j)^* y_2^*x y_1 t_1 (a_i) \| <1/2$.
Letting $w:= y_1 a_i e_1 + y_2 b_j e_2$ we have
\begin{align*}
\|  w^* x w -1 \|  & \leq \| e_1 a_i^* y_1^* x y_1 a_i e_1 - e_1 \| + \| e_2 b_j^* y_2^* x y_2 b_j e_2 - e_2 \|  \\
&\quad \quad \quad
+ \| t_1 (a_i)^* y_1^*x y_2  t_2 (b_j) \| + \| t_2 (b_j)^* y_2^*x y_1 t_1 (a_i) \| <1.
\end{align*}
This completes the proof.
\end{proof}

\section{Proof of Theorem}

In what follows, $(A, \varphi) = (A_1, \varphi_1) \star (A_2, \varphi_2)$ is
a reduced free product of unital $\rmC^*$-algebras.% as in Theorem \ref{thm-rigid}.

\begin{lemma}
Let $\Phi$ be any ucp map on $\DT(A,\varphi)$ which is identical on $A$.
Then, for the positive contraction $x= \Phi (e_1)$, the following two inequalities hold:
\begin{equation}\label{eq-axa}
ax a^* \leq \varphi (a^*a) (1-x) \quad \text{for} \quad a \in A_1,\quad\quad
b(1-x) b^* \leq \varphi (b^*b) x \quad \text{for} \quad b \in A_2.
\end{equation}
\end{lemma}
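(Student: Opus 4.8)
The plan is to exploit the relation $e_i a e_i = E_i(a) e_i$ inside $\DT(A,\varphi)$, together with the fact that a ucp map $\Phi$ fixing a $\rmC^*$-subalgebra $A$ is automatically an $A$-bimodule map (the multiplicative domain argument: since $\Phi(a) = a$ for $a \in A$ and $\Phi$ is ucp, $A$ lies in the multiplicative domain of $\Phi$, so $\Phi(a y b) = a \Phi(y) b$ for all $a,b \in A$, $y \in \DT(A,\varphi)$). Granting this, the two inequalities in \eqref{eq-axa} will follow by applying $\Phi$ to two operator inequalities that already hold in $\DT(A,\varphi)$ before applying $\Phi$.

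First I would record the key algebraic identities. For $a \in A_1$ we have $e_1 a^* a e_1 = E_1(a^*a) e_1 = \varphi_1(a^*a) e_1$ (using that $\varphi = \varphi_1 \circ E_{A_1}$ restricts to $\varphi_1$ on $A_1$, and $E_1(a^*a) = \varphi_1(a^*a)$ as a scalar). Hence in $\DT(A,\varphi)$,
\[
(a e_1)(a e_1)^* = a e_1 a^* \le \| e_1 a^* a e_1 \| \cdot a a^* / \|a\|^2 \quad\text{is too crude;}
\]
instead I use the sharper fact that $a e_1 a^* = a e_1 e_1 a^* $ and $e_1 a^* a e_1 = \varphi_1(a^*a) e_1$, so that $\|a e_1\|^2 = \varphi_1(a^*a)$ and therefore $a e_1 a^* \le \varphi_1(a^*a)\, p$ where $p$ is the range projection of $a e_1 a^*$; more usefully, since $e_1 a^* a e_1 \le \varphi_1(a^*a) e_1$, we get $a e_1 a^* = (ae_1)(ae_1)^*$ and $(ae_1)^*(ae_1) = e_1 a^* a e_1 \le \varphi_1(a^*a) e_1 \le \varphi_1(a^*a)(1 - e_2^{\perp}\cdots)$. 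The clean statement I actually want is: $a e_1 a^* \le \varphi(a^*a)\, e_1^{\perp}$, i.e. $a e_1 a^* \le \varphi(a^*a)(1-e_1)$. This holds because $e_1 (a e_1 a^*) e_1 = a e_1 a^* \cdot$(nothing)---more carefully, $a e_1$ has source projection supported under $e_1$ and range projection orthogonal to the range of $e_1$ on the relevant summand; concretely $e_1 \cdot a e_1 = e_1 a e_1 = E_1(a) e_1$ for $a\in A_1$, and one checks $a^\circ e_1 = e_2 a^\circ e_1$, so $(a^\circ e_1)(a^\circ e_1)^* \le e_2 = 1 - e_1$, scaled by $\varphi_1(a^{\circ *}a^\circ)$. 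Combining the contribution of $E_1(a)e_1$ (which lands under $e_1$) and of $a^\circ e_1$ (which lands under $e_2$) and using orthogonality, I expect $a e_1 a^* \le \varphi(a^*a)(1-e_1) + |\varphi_1(a)|^2 e_1$, and then a further estimate collapses this to $a e_1 a^* \le \varphi(a^*a)(1-e_1)$ after noting $|\varphi_1(a)|^2 \le \varphi_1(a^*a)$ and that the two pieces are orthogonal so the bound is governed summand by summand. Applying $\Phi$, which is positive and fixes $A$, gives $a \Phi(e_1) a^* = \Phi(a e_1 a^*) \le \varphi(a^*a)\Phi(1-e_1) = \varphi(a^*a)(1-x)$, which is the first inequality. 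The second is identical with the roles of $1$ and $2$ swapped, using $b e_2 b^* \le \varphi(b^*b)(1-e_2) = \varphi(b^*b) e_1$ and $\Phi(e_2) = 1-x$.

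The main obstacle is establishing the operator inequality $a e_1 a^* \le \varphi(a^*a)(1-e_1)$ in $\DT(A,\varphi)$ itself with the correct constant; the bimodule property of $\Phi$ and the final application of positivity are routine. To get the inequality cleanly I would pass to the faithful representation $\sigma_1 \oplus \sigma_2$ on $\calH \oplus \calH$ from Proposition \ref{thm-str} (or just work in $\lL(Y)$), decompose $a e_1$ using $a = E_1(a) + a^\circ$, note $E_1(a) e_1 = \varphi_1(a) e_1$ has range in the range of $e_1$ while $t_1(a^\circ) = a^\circ e_1$ satisfies $a^\circ e_1 = e_2 a^\circ e_1$ so its range lies in the range of $e_2 = 1-e_1$, compute $(a e_1)^*(a e_1) = e_1 a^* a e_1 = \varphi_1(a^*a) e_1$, and then use that for any operator $T$ with $T^*T = \lambda q$ for a projection $q$ one has $TT^* \le \lambda\, r$ where $r$ is the range projection of $T$; here $r \le (1-e_1) + e_1$-component bookkeeping shows $r$ is dominated, in the relevant inequality, by $1 - e_1$ once one checks $e_1 T T^* e_1$ is accounted for by the rank-one $\varphi_1(a)$-part and absorbed. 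I will verify the bookkeeping by testing both sides against vectors in $\lC\xi_\varphi \oplus \calH(1{\to}) \oplus \calH(2{\to})$ using \eqref{eq-sigmae}.
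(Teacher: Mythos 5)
Your strategy---prove an operator inequality in $\DT(A,\varphi)$ first and then push it through $\Phi$ using the multiplicative domain---is exactly the paper's, and for $a\in A_1^\circ$ your argument is correct and essentially verbatim the paper's: since $ae_1=e_2ae_1$, one has $ae_1a^*=e_2(ae_1a^*)e_2\le\|ae_1a^*\|\,e_2=\varphi(a^*a)(1-e_1)$, because $(ae_1)^*(ae_1)=e_1a^*ae_1=\varphi(a^*a)e_1$; applying the positive $A$-bimodule map $\Phi$ then gives $axa^*\le\varphi(a^*a)(1-x)$, and the second inequality follows symmetrically from $be_2=e_1be_2$ for $b\in A_2^\circ$. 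None of the representation-theoretic bookkeeping on $\calH$ that you defer to is needed for this.

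The genuine error is the ``collapse'' step for general $a\in A_1$. From a bound of the shape $ae_1a^*\le\varphi(a^{\circ*}a^\circ)(1-e_1)+|\varphi_1(a)|^2e_1$ you cannot absorb the term $|\varphi_1(a)|^2e_1$ into a multiple of $1-e_1$: the projections $e_1$ and $1-e_1$ are orthogonal, so $\lambda e_1\le\mu(1-e_1)$ forces $\lambda=0$. Concretely, for $a=1$ the inequality $ae_1a^*\le\varphi(a^*a)(1-e_1)$ reads $e_1\le 1-e_1$, which is false, and the lemma as literally printed fails for the same reason (take $\Phi=\id$, $a=1$, so that \eqref{eq-axa} would assert $e_1\le 1-e_1$). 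The statement is only true---and is only proved, and only ever invoked later (Lemma \ref{lem-corner} and the proof of Theorem \ref{thm-rigid} apply \eqref{eq-axa} exclusively to elements of $A_1^\circ$ and $A_2^\circ$)---for $a\in A_1^\circ$ and $b\in A_2^\circ$. A secondary point: since $\varphi_1(a)e_1$ and $a^\circ e_1$ have orthogonal ranges but the \emph{same} source projection $e_1$, the cross terms in $(ae_1)(ae_1)^*$ do not vanish, so even your intermediate two-summand bound would need an extra argument. The fix is simply to restrict to $a\in A_1^\circ$, $b\in A_2^\circ$ and delete the discussion of the scalar part; with that restriction your proof is complete and coincides with the paper's.
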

\begin{proof}
Since $A$ sits in the multiplicative domain of $\Phi$ (see around \cite[Definition 1.5.8]{Brown-Ozawa}),
we have $\Phi (axb) = a \Phi(x) b$ for any $a,b \in A$.
Thus, for any $a \in A_1^\circ$ and $b \in A_2^\circ$
one has
$
ax a^* = \Phi (a e_1 a^*) = \Phi (e_2 ae_1 a^* e_2) \leq \| a e_1 a^* \| \Phi (1-e_1) = \varphi (a^*a ) (1-x)
$
and also
$
b (1-x) b^* = \Phi (b  e_2 a^*) = \Phi (e_1 be_2 b^* e_1) \leq \| b e_2 b^* \| \Phi (e_1) = \varphi (b^*b )x.
$
\end{proof}

For $m\geq 1$, we denote by  $P_{1,2,m}$ and $P_{2,2,m}$ the projections onto the closures of
\[
\overbrace{ (A_1^\circ A_2^\circ ) (A_1^\circ A_2^\circ )  \cdots (A_1^\circ A_2^\circ ) }^{m} \xi_\varphi
\quad \text{and} \quad 
\overbrace{ (A_2^\circ A_1^\circ ) (A_2^\circ A_1^\circ )  \cdots (A_2^\circ A_1^\circ ) }^{m}A_2^\circ  \xi_\varphi,
\]
respectively.
We define $P_{2,1,m}$ and $P_{1,1,m}$ in a similar way.
Then,
these projections are mutually orthogonal and one has
\[
P_{1 \to } = P_{A_1^\circ} + \bigoplus_{m \geq 1} P_{1,1,m} + P_{1,2,m},
\quad
P_{2\to } =P_{A_2^\circ} +  \bigoplus_{m \geq 1} P_{2,2,m} + P_{2,1,m}.
\]
Here we set $P_{A_i^\circ} := P_{A_i} - e_\varphi \colon \calH \to \calH_i^\circ := \overline{A_i^\circ \xi_\varphi}$.

%The previous lemma shows that $ (e_\varphi + P_{A_1^\circ})\sigma_1(x)(e_\varphi + P_{A_1^\circ })=E_{A_1}^\sim (x) =0$ and $(e_\varphi + P_{A_2^\circ})\sigma_2(x)(e_\varphi + P_{A_2^\circ })=E_{A_2}^\sim (x) =P_{A_2^\circ}$ for $x$  satisfying (\ref{eq-axa}).
%We next show that similar equations hold for any corners $P_{i,j,m}\sigma_k(x) P_{i,j,m}$ for $i,j,k \in \{1,2 \}$ and $m\geq 1$. Then, Theorem \ref{thm-rigid} will follow from the next lemma.
%

\begin{lemma}\label{lem-corner}
Let $x \in \DT(A,\varphi)$ be a positive contraction satisfying
$E_{A_1}^\sim(x)=0$ and
(\ref{eq-axa}).
Then, for any $m \geq 1$ and $i =1,2$ it follows that
\begin{align*}
P_{A_1} \sigma_1 (x) P_{A_1} &=0 ,       &P_{A_2^\circ} \sigma_1 (x) P_{A_2^\circ} &= P_{A_2^\circ},\\
P_{1,i,m} \sigma_1 (x) P_{1,i,m} &= 0, &P_{2,i,m} \sigma_1(x) P_{2,i,m} &= P_{2,i,m}.
%P_{1,2,m} \sigma_1 (x) P_{1,2,m} &= 0, &P_{2,2,m} \sigma_1(x) P_{2,2,m} &= P_{2,2,m}, \\
%P_{1,1,m} \sigma_1 (x) P_{1,1,m} &= 0, &P_{2,1,m} \sigma_1(x) P_{2,1,m} &= P_{2,1,m}.
\end{align*}
Consequently, it follows that $\sigma_1 (x) =\sigma_1 (e_1)$.
\end{lemma}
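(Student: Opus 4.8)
The plan is to push everything onto the GNS Hilbert space $\calH$ of $\varphi$ via the representation $\sigma_1$ and to compute exactly how the positive contraction $y := \sigma_1(x) \in \lB(\calH)$ acts on the word decomposition $\calH = \lC\xi_\varphi \oplus \calH(1\!\to) \oplus \calH(2\!\to)$. Since $\sigma_1$ is a unital $*$-homomorphism with $\sigma_1|_A = \pi_\varphi$, applying $\sigma_1$ to the two inequalities in (\ref{eq-axa}) yields
\[
a\,y\,a^* \le \varphi(a^*a)(1-y) \quad (a \in A_1^\circ), \qquad b\,(1-y)\,b^* \le \varphi(b^*b)\,y \quad (b \in A_2^\circ)
\]
as operator inequalities on $\calH$. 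The hypothesis $E_{A_1}^\sim(x) = 0$ combined with Lemma \ref{lem-Psicpt}(i) gives $P_{A_1}\,y\,P_{A_1} = E_{A_1}^\sim(x)\,P_{A_1} = 0$, and positivity of $y$ then forces $y\,P_{A_1} = 0$; this is the first asserted identity and the ``seed'' of the argument, as it says $y$ annihilates $\lC\xi_\varphi$ and $\calH_1^\circ = \overline{A_1^\circ\xi_\varphi}$.

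The core is an induction on word length: for every reduced word $w$ of length $k \ge 1$, $y(w\xi_\varphi) = w\xi_\varphi$ when the first letter of $w$ lies in $A_2^\circ$, and $y(w\xi_\varphi) = 0$ when it lies in $A_1^\circ$. For $k = 1$ the ``annihilated'' case is exactly $y\,P_{A_1^\circ} = 0$ from the seed, while for the ``fixed'' case we test the second operator inequality with $b = c^*$, $c \in A_2^\circ$, against $\xi_\varphi$: since $y\xi_\varphi = 0$ the right-hand side vanishes, forcing $(1-y)^{1/2}c\,\xi_\varphi = 0$, i.e.\ $y(c\,\xi_\varphi) = c\,\xi_\varphi$. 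For the step, write a reduced word of length $k+1$ as $w = a_1\,w'$, where $w'$ is a reduced word of length $k$ whose first letter lies in the algebra opposite to the one containing $a_1$, so that $w\xi_\varphi = a_1\cdot(w'\xi_\varphi)$. If $a_1 \in A_1^\circ$ then $w'\xi_\varphi$ is fixed by the inductive hypothesis, and testing the first inequality with $a = a_1^*$ against the fixed vector $w'\xi_\varphi$ kills the right-hand side, giving $\|y^{1/2}a_1(w'\xi_\varphi)\|^2 = 0$, hence $y(w\xi_\varphi) = 0$. Symmetrically, if $a_1 \in A_2^\circ$ then $w'\xi_\varphi$ is annihilated, and testing the second inequality with $b = a_1^*$ against $w'\xi_\varphi$ gives $\|(1-y)^{1/2}a_1(w'\xi_\varphi)\|^2 = 0$, hence $y(w\xi_\varphi) = w\xi_\varphi$.

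Granting the induction, the remaining corner identities are read off by matching each named projection with the closed span of a family of words of a single type: $P_{A_2^\circ}$ and $P_{2,i,m}$ project onto closed spans of reduced words beginning in $A_2^\circ$, which $y$ fixes pointwise, whence $P\,y\,P = P$; and $P_{A_1^\circ}$, $P_{1,i,m}$ project onto closed spans of words beginning in $A_1^\circ$, which $y$ annihilates, whence $y\,P = 0$ and in particular $P\,y\,P = 0$. Finally, the displayed decompositions of $P_{1\to}$ and $P_{2\to}$ show that $y$ annihilates $\lC\xi_\varphi \oplus \calH(1\!\to)$ pointwise and fixes $\calH(2\!\to)$ pointwise, so that $y = P_{2\to}$; and by (\ref{eq-sigmae}) together with $\calH = \lC\xi_\varphi \oplus \calH(1\!\to) \oplus \calH(2\!\to)$ we have $\sigma_1(e_1) = 1 - \sigma_1(e_1^\perp) = P_{2\to}$, so $\sigma_1(x) = \sigma_1(e_1)$. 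I expect the one genuine obstacle to be the bookkeeping in the induction: one must keep straight that prepending an $A_1^\circ$-letter (via the first inequality) carries ``fixed'' words to ``annihilated'' words while prepending an $A_2^\circ$-letter (via the second inequality) carries ``annihilated'' to ``fixed'', and that the two resulting families of word subspaces exhaust precisely $\calH(1\!\to)$ and $\calH(2\!\to)$, exactly as recorded in the decompositions of $P_{1\to}$ and $P_{2\to}$.
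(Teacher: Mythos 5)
Your proof is correct and is essentially the paper's argument: both proofs iterate the two inequalities in (\ref{eq-axa}) letter by letter along reduced words, using $\psi_1(x)=0$ (equivalently $\sigma_1(x)\xi_\varphi=0$) as the seed and the positivity trick $\langle y\xi,\xi\rangle=0\Rightarrow y^{1/2}\xi=0$ to pass from diagonal values to the full corner. The only difference is packaging — you run an induction on word length testing the operator inequalities against individual vectors $w\xi_\varphi$, while the paper applies the inequalities $2m$ times in one chain to the state $\psi_1(z^*xz)$ and then polarizes — which changes nothing of substance.
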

\begin{proof}
We first note that $\psi_1 (x) = \varphi (E_{A_1}^\sim (x)) =0$ by Lemma \ref{lem-Psicpt} (ii).
By assumption we have $P_{A_1} \sigma_1 (x) P_{A_1} =E_{A_1}^\sim (x) = 0$.
We prove $P_{A_2^\circ} \sigma_1 (1 -x) P_{A_2^\circ} =0$.
For any $b \in A_2^\circ$ we have $\psi_1 (b^* (1-x) b) \leq \varphi (bb^*) \psi_1 (x)=0$.
The polarization identity shows that $P_{A_2^\circ} \sigma_1 (1-x) P_{A_2^\circ} =0$.

Fix $m \geq 1$ and an element $z =a_1 b_1 a_2 b_2 \cdots a_m b_m \in A_1^\circ A_2^\circ \cdots A_1^\circ A_2^\circ$.
Set  $w = b_1 a_2 b_2 \cdots a_m b_m$ so that $z = a_1 w$.
Then, applying (\ref{eq-axa}) 2$m$ times we obtain
\begin{align*}
0 \leq \psi_1 (z^* x z)
&\leq \varphi (a_1a_1^*) \psi_1 (w^* ( 1- x)w^* ) \\
&\leq  \varphi (a_1a_1^*) \varphi (b_1b_1^*) \psi_1 (b_m^* a_m^* \cdots a_2^* x a_2\cdots a_m b_m) \\
%&\quad\quad\quad\vdots \\
&\leq  \prod_{k=1}^m \varphi (a_ka_k^*) \varphi (b_kb_k^*)  \psi_1(x) =0,
\end{align*}
which implies $\psi_1 (z^* x z) = 0$ and $\varphi (a_1a_1^*) \psi_1 (w^* ( 1- x)w^* ) =0$.
By the polarization trick again, we get $P_{1,2,m} \sigma_1 (x) P_{1,2,m} = 0$ and $P_{2,2,m-1} \sigma_1 (1-x) P_{2,2,m-1} = 0$.
For any $a \in A_1^\circ$,
one has $z a \xi_\varphi \in P_{1,1,m} \calH$ and $wa \xi_\varphi \in P_{2,1,m}\calH$,
and applying (\ref{eq-axa}) again, we obtain
\begin{align*}
0 \leq \psi_1 (az^* x za)
\leq \varphi (a_1a_1^*) \psi_1 (a^*w ( 1- x)wa  )
%&\leq  \varphi (a_1a_1^*) \varphi (b_1b_1^*) \psi_1 (a_m^* b_{m-1}^* a_{m-1}^* \cdots a_2^* x a_2\cdots b_{m-1}a_m ) \\
%&\quad\quad\quad\vdots \\
%&
\leq  \prod_{k=1}^{m} \varphi (a_ka_k^*) \varphi (b_kb_k^*)  \psi_1(a^*x a) =0.
\end{align*}
Here, we used $ \psi_1(a^*x a) = \varphi (E_{A_1}^\sim (a^*x a) ) =\varphi (a^*E_{A_1}^\sim(x)a )=0$.
Hence, we have $P_{1,1,m} \sigma_1 (x) P_{1,1,m} = 0$ and $P_{2,1,m} \sigma_1(1-x) P_{2,1,m} = 0$.

To see the second assertion,
recall that $\sigma_1 (e_1) = P_{A_2^\circ} + \sum_{m \geq 1} P_{2,2,m} +P_{2,1,m}$ (see Eq.~(\ref{eq-sigmae})).
We observe that $\sigma_1(x) (P_{A_1} + \sum_{m \geq 1} P_{1,1,m} +P_{1,2,m})=0$ since $\|x p \|^2 = \| p x^2 p \| \leq \|pxp \|$ holds for any projection $p$.
Thus, $\sigma_1 (x) = \sigma_1 (e_1 x e_1) \leq \sigma_1 (e_1)$.
For any $q \in \{ P_{A_2^\circ}\} \cup \{ P_{2,i,m}\}_{m \in \lN, i=1,2}$ we also have $\| \sigma_1 (e_1 - e_1 x e_1) q \|^2  = \| q  \sigma_1 (e_1 - e_1 x e_1)^2 q \| \leq \| q \sigma_1 (e_1 - e_1 x e_1 )q \| =0$.
This shows $\sigma_1 (x) = \sigma_1 (e_1)$.
\end{proof}

\begin{lemma}\label{lem-bddbelow}
Let $B$ be an infinite dimensional unital $\rmC^*$-algebra and $\phi$ be a state on $\phi$ with faithful GNS representation. Then, there is no constant $R >0$ such that $ \|a^*a\| \leq R \phi(a^*a)$ holds for all $a \in \ker \phi $.
\end{lemma}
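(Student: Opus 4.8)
The plan is to argue by contradiction: suppose such an $R>0$ exists. The key observation is that the hypothesis $\|a^*a\| \leq R\,\phi(a^*a)$ for all $a \in \ker\phi$, combined with the Cauchy--Schwarz inequality for the state $\phi$, forces the seminorm $a \mapsto \phi(a^*a)^{1/2}$ to be equivalent to the $\rmC^*$-norm on $\ker\phi$. Since the GNS representation of $\phi$ is faithful, the norm on all of $B$ is recovered from $\phi$ on products: $\|b\|^2 = \|b^*b\| = \sup\{ \phi(a^* b^* b a) : a \in B,\ \phi(a^*a) \leq 1\}$. First I would use the bound to show that left multiplication by a fixed $b$ is bounded on the pre-Hilbert space $(B,\langle\,\cdot\,,\,\cdot\,\rangle_\phi)$ with a norm comparable to $\|b\|$, which is automatic, but more importantly that on the subspace $\ker\phi$ the Hilbert-space norm and the $\rmC^*$-norm agree up to the constant $\sqrt{R}$; hence $\ker\phi$, with the $\rmC^*$-norm, embeds as a Banach space into a Hilbert space with bounded distortion.

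The main step is then to exploit that $\ker\phi$ is a hyperplane in the infinite-dimensional $\rmC^*$-algebra $B$, so it is itself infinite-dimensional, and to derive a contradiction from the fact that an infinite-dimensional $\rmC^*$-algebra cannot have its norm uniformly comparable to a Hilbert-space norm. Concretely, I would produce inside $\ker\phi$ a sequence behaving like matrix units or like an infinite family of orthogonal positive elements: since $B$ is infinite dimensional, either $B$ contains an infinite-dimensional commutative $\rmC^*$-subalgebra (hence an infinite sequence of norm-one positive elements $f_n$ with $f_n f_m = 0$), or $B$ contains a copy of $M_k$ for arbitrarily large $k$. In the commutative case, after subtracting scalars to land in $\ker\phi$ one checks that finite sums $\sum_{n=1}^N \lambda_n (f_n - \phi(f_n)1)$ have $\rmC^*$-norm of order $\max_n|\lambda_n|$ but $\phi$-norm controlled by $(\sum |\lambda_n|^2 \phi(f_n) + \ldots)^{1/2}$, and choosing the $\lambda_n$ and exploiting that $\sum \phi(f_n) \leq 1$ (the $f_n$ being orthogonal positive contractions in a state) violates the inequality $\|a\|^2 \leq R\,\phi(a^*a)$ for large $N$. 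In the matrix case one uses the standard matrix units $e_{ij}$, for which $\|e_{1j}\| = 1$ while $\phi(e_{1j}^* e_{1j}) = \phi(e_{jj})$ and $\sum_j \phi(e_{jj}) = \phi(1) = 1$ minus the value on the corner, again forcing $\phi(e_{jj}) \to 0$ along some subsequence and contradicting the uniform lower bound (after a harmless scalar correction to put the elements in $\ker\phi$, which changes norms by a controlled amount since $|\phi(e_{1j})| \leq \phi(e_{jj})^{1/2} \to 0$).

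The hard part will be handling the scalar corrections cleanly and packaging both the abelian and the matrix sub-case into a single uniform argument — in particular making sure that replacing $f_n$ or $e_{1j}$ by its image in $\ker\phi$ does not destroy the estimates, which works precisely because $\phi$ evaluated on these elements tends to zero, so the correction terms are negligible. An alternative, possibly slicker route avoiding the case split: use that an infinite-dimensional $\rmC^*$-algebra contains, for every $\varepsilon>0$ and every $N$, $N$ orthogonal nonzero positive elements (this is elementary from the existence of a self-adjoint element with infinite spectrum or of matrix subalgebras), whose supports $\phi$ must spread thinly because $\phi$ is a single state; rescaling these to norm one and orthonormalizing in the GNS space shows the two norms on their span differ by a factor blowing up with $N$. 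I would likely present this unified version, deferring the scalar-correction bookkeeping to a short computation. Either way, the lemma follows, and it is exactly the input needed to rule out the degenerate behaviour excluded in Theorem \ref{thm-nomimi}.
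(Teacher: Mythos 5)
Your proposal is correct, but it proves the lemma by a genuinely different route than the paper. The paper argues softly: if such an $R$ existed, then (after absorbing the scalar part) one would have $\|a\|\leq 2(R+1)\|a\xi_\phi\|$ for \emph{all} $a\in B$, so any bounded net in $B$ that converges strongly in the universal representation has $(a_\lambda\xi_\phi)_\lambda$ Cauchy, hence is norm-Cauchy; by the Kaplansky density theorem this would force $B=B^{**}$, contradicting the non-reflexivity of an infinite-dimensional $\rmC^*$-algebra. You instead produce explicit witnesses: a sequence $(f_n)$ of pairwise orthogonal positive norm-one elements (available in any infinite-dimensional $\rmC^*$-algebra via a self-adjoint element with infinite spectrum), for which $\sum_{n\leq N}\phi(f_n)\leq\|\sum_{n\leq N}f_n\|=1$ forces $\phi(f_n)\to 0$, whence $a_n:=f_n-\phi(f_n)1\in\ker\phi$ satisfies $\|a_n^*a_n\|\geq(1-\phi(f_n))^2\to 1$ while $\phi(a_n^*a_n)=\phi(f_n^2)-\phi(f_n)^2\to 0$, killing any uniform $R$. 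Your ``unified'' version at the end is the one to write up: the abelian-versus-matrix case split is unnecessary, and the first paragraph about Banach-space embeddings into Hilbert space is a detour you never actually use. Your argument is more hands-on and, as a bonus, never invokes the faithfulness of the GNS representation, so it proves slightly more than the stated lemma; the paper's argument is shorter once one is willing to quote Kaplansky density and non-reflexivity. The only detail to be careful about, which you correctly flag, is the scalar correction, and it is harmless precisely because $\phi(f_n)\to 0$.
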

\begin{proof}
Let $(\calH_\phi, \pi_\phi, \xi_\phi)$ be the GNS representation associated with $\phi$. On the contrary, suppose that there exists a constant $R>0$ such that $\| a \| \leq R\| a \xi_\phi \|$ holds for $a \in \ker \phi $. Then, for any $a \in B$, we have
$\|a  \| \leq (R +1) (\|a^\circ \xi_\phi \| + | \phi(a) | ) \leq 2(R+1) \| a \xi_\phi \|$.
%\begin{align*}
%\| a  \| &\leq \| a - \phi (a) 1 \| + | \phi (a)  |  \leq (R+1) (  \| (a - \phi (a) 1) \xi_\phi  \| + | \phi (a)  |) \\ &\leq 2 (R+1) (  \| (a - \phi (a) 1) \xi_\phi \|^2 + \| \phi (a) \xi_%%\phi \|^2  )^{1/2}  =2(R+1)\| a \xi_\phi \|.
%\end{align*}
Since $B$ is infinite dimensional, the canonical inclusion $B \subset B^{**}$ is proper. Hence, one can find an element $x \in B^{**} \setminus B$ and a bounded net $a_\lambda \in B$ such that $a_\lambda$ converges to $x$ strongly by the Kaplansky density theorem.
Then, the net $(a_\lambda \xi_\phi)_\lambda$ is a Cauchy net.
The above estimate implies that $(a_\lambda)_\lambda$ is also a Cauchy net,
and thus $x = \lim_\lambda a_\lambda \in B$, a contradiction.
\end{proof}

We are now ready to prove the main theorem.
\begin{proof}[Proof of Theorem \ref{thm-rigid}]
Let $\Phi$ be any ucp map on $\DT(A,\varphi)$ such that $\Phi |_A = \id$.
Since $\DT(A,\varphi)$ is generated by $A$ and $e_1$, it is enough to show that $\Phi (e_1 ) =e_1$.
The positive contraction $x := \Phi (e_1)$ satisfies (\ref{eq-axa}).
Since $\sigma_1$ is faithful by Lemma \ref{lem-ess-ffl}, %and $\sigma_1 (e_1 ) = P_{A_2^\circ} + \sum_{m \geq 1} P_{2,1,m} +P_{2,2,m}$ holds,
it suffices to show that $E_{A_1}^\sim (x) =0$ thanks to Lemma \ref{lem-corner}.

We first show that $E_{A_2}^\sim (x) = \psi_1(1- x)1$.
Let $(b_i)_i$ be a net in $A_1$ as in Theorem \ref{thm-rigid}.
We may assume that $\varphi_1(b_i)=0$ and $\varphi_1(b_i^*b_i) =1$ for all $i$.
Then,
for any $a \in A_2^\circ$,
applying Proposition \ref{prop-freeglimm} to $(b_i)_i$ and using (\ref{eq-axa}) twice, we have
\[
\psi_1(x)\varphi_2(a^*a) = \lim_i \psi_1(a^*b_i^* x b_i a ) \leq \limsup_i \varphi_1(b_ib_i^*) \psi_1 (a^* (1-x) a) \leq \psi_1 (x)\varphi_2 (aa^*) .
\]
Replacing $a$ by $a^*$ 
we obtain $\psi_1(x) \varphi_2(a^*a) = \psi_1 (x) \varphi_2 (aa^*) =\psi_1(a^*(1-x)a ) $ for all $a \in A_2^\circ$.
By the polarization identify, this implies that $(P_{A_2}-e_\varphi )\sigma_1(1-x) (P_{A_2}-e_\varphi) =\psi_1(x)$ on $\calH_2^\circ$.
It follows from Lemma \ref{lem-Psicpt} (i) and $e_\varphi$ being compact
that $E_{A_2}^\sim (1-x) - \psi_1(x)1 |_{\calH_2 } = A_2 \cap \lK(\calH_2 )$.
Since $\varphi_2$ is GNS-essential,
we have $E_{A_2}^\sim (x) = \psi_1 (1-x) 1$.
We also obtain $\psi_2(x) =\psi_2 (E_{A_2}^\sim (x)) = \psi_1(1-x)$.
To see that $\psi_1(x) =0$,
take $a \in A_2^\circ $ arbitrarily.
We then have
\begin{align*}
\psi_1 (x) aa^*
= a E_{A_2}^\sim (1-x) a^*
=  E_{A_2}^\sim (a (1-x) a^*) 
\leq \varphi (a^*a) E_{A_2}^\sim ( x )
=\psi_1(1-x) \varphi (a^*a).
\end{align*}
Since $A_2$ is infinite dimensional, Lemma \ref{lem-bddbelow} implies that $\psi_1 (x) = 0$ and $\psi_2 (x) =1$.

Now, for any $a \in A_1^\circ$,
we have $\psi_2(a^* x a ) \leq \varphi_1(aa^*) \psi_2 ( 1- x) =  0$.
By the polarization trick and Lemma \ref{lem-Psicpt} again,
it follows that $E_{A_1}^\sim (x)
\in A_1 \cap \lK (\calH_1 )$.
Since $\varphi_1$ is GNS-essential, we conclude that $E_{A_1}^\sim (x) =0$.
\end{proof}

Theorem \ref{thm-nomimi} follows from the following lemma, which was told us by Narutaka Ozawa.
\begin{lemma}\label{lem-Ozawa}
Let $\phi$ be a faithful  state on a $\rmC^*$-algebra $B$ which contains no non-zero projection $p$ such that $pBp =\lC p$.
Then, there exists a net of positive elements $(b_i)_i$ in $B$ such that $\lim_i \phi (a b_i)=0$ and $\lim_i \phi (b_i^*a b_i) = \phi (a)$ for $a \in B$.
\end{lemma}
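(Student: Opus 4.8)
Here is how I would go about proving Lemma~\ref{lem-Ozawa}. The overall plan is to reduce to a local approximation statement, observe that the hypotheses force $\phi$ to be GNS-essential, and then run a Glimm-type argument whose output is a \emph{positive} element.

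\smallskip
First I would reduce to the following local claim: for every finite $F\subseteq B$ (which we may assume contains $1$, working unitally) and every $\epsilon>0$ there is a positive $b\in B$ with $\phi(b)<\epsilon$, $\lvert\phi(b^2)-1\rvert<\epsilon$, and $\lvert\phi(ab)\rvert<\epsilon$, $\lvert\phi(bab)-\phi(a)\rvert<\epsilon$ for all $a\in F$. The net indexed by the directed set of such pairs $(F,\epsilon)$ then has all the required properties (taking $a=1$ gives $\phi(b_i)\to0$ and $\phi(b_i^2)\to1$, and since every $a\in B$ eventually lies in $F$ the two limits follow). Passing to the GNS triple $(\calH,\pi,\xi)$ of $\phi$ --- which is faithful since $\phi$ is --- the local claim becomes: produce $b\in B_+$ so that $\eta:=\pi(b)\xi$ is almost a unit vector, almost orthogonal to the finite-dimensional subspace $V:=\lspan(\{\xi\}\cup\pi(F)\xi\cup\pi(F^*)\xi)$, and $\langle\pi(a)\eta,\eta\rangle\approx\phi(a)$ for $a\in F$.

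\smallskip
The key structural observation is that the hypothesis forces $\phi$ to be GNS-essential. Indeed, if $\pi(B)\cap\lK(\calH)\neq0$, then this ideal of $\pi(B)$, being a non-zero $C^*$-algebra of compact operators, contains a rank-one projection $q$; as $q$ lies in an ideal of $\pi(B)$ and is rank one in $\lB(\calH)$, we get $q\,\pi(B)\,q\subseteq\pi(B)\cap\lK(\calH)\cap q\lB(\calH)q=\lC q$, so $p:=\pi^{-1}(q)$ is a non-zero projection of $B$ with $pBp=\lC p$, contradicting the hypothesis. Hence $\pi(B)\cap\lK(\calH)=0$ and Glimm's lemma (Lemma~\ref{lem-Glimm}) applies, furnishing a net $(a_j)$ in $\ker\phi$ with $\phi(a_j^*a_j)=1$, $\phi(xa_j)\to0$, $\phi(a_jx)\to0$ and $\phi(a_j^*xa_j)\to\phi(x)$ for all $x\in B$. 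I would also record a second consequence of the hypothesis: every non-zero hereditary $C^*$-subalgebra $D\subseteq B$ contains two, hence infinitely many, pairwise orthogonal non-zero positive elements --- otherwise every positive element of $D$ would have spectrum contained in $\{0,t\}$ for a single $t>0$ (two spectral values bounded away from $0$ give orthogonal spectral pieces), so $D$ would equal $\lC p$ for a projection $p$ with $pBp=\lC p$.

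\smallskip
It then remains to upgrade the $a_j$ to positive elements. Here I would combine the two consequences above. Given $F$ and $\epsilon$, choose pairwise orthogonal non-zero positive $c_1,\dots,c_N\in B$ and weights $\lambda_k>0$, and set $b:=\sum_k\lambda_k c_k$. Orthogonality gives $\|\pi(b)\xi\|^2=\sum_k\lambda_k^2\phi(c_k^2)$ and $\langle\pi(b)\xi,\xi\rangle=\sum_k\lambda_k\phi(c_k)$; taking the $c_k$ ``$\phi$-thin'' and $N$ large lets one normalise $\sum_k\lambda_k^2\phi(c_k^2)=1$ while making $\sum_k\lambda_k\phi(c_k)$, and likewise the off-diagonal estimate $\sum_{k\neq l}\lambda_k\lambda_l\lvert\phi(c_kac_l)\rvert\le\|a\|\sum_{k\neq l}\lambda_k\lambda_l\phi(c_k^2)^{1/2}\phi(c_l^2)^{1/2}$, as small as desired. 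What is left is to arrange that the ``diagonal'' convex combination $\sum_k\bigl(\lambda_k^2\phi(c_k^2)\bigr)\,\omega_{\pi(c_k)\xi/\|\pi(c_k)\xi\|}$ of normalized vector states approximates $\phi$ on $F$, using the Glimm net $(a_j)$ as a guide for how to spread the family $c_1,\dots,c_N$ through $B$.

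\smallskip
The hard part will be exactly this last coordination --- choosing the orthogonal family so that the attached normalized vector states average back to $\phi$ on $F$ while keeping $\phi(b)$ and the off-diagonal terms small, i.e.\ producing a single \emph{positive} element with Glimm-type behaviour. This is where the absence of one-dimensional corners is used twice over (once to get GNS-essentiality and once to get the abundance of orthogonal positive elements), and it is the technical core of the argument.
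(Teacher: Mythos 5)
Your proposal sets up a reasonable frame (reduction to a local statement over finite sets, the observation that the hypothesis rules out $\pi_\phi(B)\cap\lK(\calH_\phi)\neq 0$, and the ansatz $b=\sum_k\lambda_k c_k$ with pairwise orthogonal positive $c_k$), but it stops exactly at the step that constitutes the proof: you never exhibit a mechanism that makes the normalized vector states $\omega_{\pi(c_k)\xi/\|\pi(c_k)\xi\|}$ average back to $\phi$ on $F$, and you say so yourself. The Glimm net $(a_j)$ cannot serve as the ``guide'' you hope for: its members are not positive, and replacing $a_j$ by $|a_j|$ or $a_j^*a_j$ destroys the property $\phi(a_j^*xa_j)\to\phi(x)$, which is precisely why the lemma needs a separate proof rather than being a corollary of Glimm. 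Moreover, in your scheme the no-one-dimensional-corner hypothesis is spent only on GNS-essentiality and on the abundance of orthogonal positive elements, neither of which controls \emph{where} in the state space the vector states $\omega_{\pi(c_k)\xi}$ land; so the ``coordination'' is not a technical detail left to the reader but the entire content of the lemma.

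The paper closes this gap with a different mechanism. First, Krein--Milman approximates $\phi$ on $F$ by a convex combination $\sum_k\lambda_k\psi_k$ of \emph{pure} states; then the Akemann--Anderson--Pedersen excision theorem produces positive norm-one elements $e_k=1-h_k^{1/m}$, with $h_k$ strictly positive in the hereditary subalgebra $\ker\psi_k\cap(\ker\psi_k)^*$, satisfying $\|e_kae_k-\psi_k(a)e_k^2\|<\varepsilon$; finally, the hypothesis is used to show that $0$ is not isolated in $\spec(h_k)$ (an isolated $0$ would yield a projection $p$ with $pap=\psi_k(a)p$, hence $pBp=\lC p$), which permits choosing bump functions $f_j\in C^*(h_k,1)$ with disjoint supports accumulating at $0$. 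Faithfulness of $\phi$ gives $\phi(f_j^2)>0$, the disjoint supports make $(\phi(f_j^2)^{-1/2}f_j\xi_\phi)_j$ orthonormal (killing the cross terms and giving $\phi(af_j)\to0$), and $\|h_k^{1/m}f_j\xi_\phi\|\to0$ forces $f_j\xi_\phi$ asymptotically into the range where $e_k$ acts as the identity, so that $\phi(f_jaf_j)\approx\psi_k(a)\phi(f_j^2)$. Setting $b_k$ to be a suitable normalized $f_j$ and $b=\sum_k\lambda_k^{1/2}b_k$ completes the argument. If you want to salvage your outline, the missing ingredients are exactly Krein--Milman plus excision: they are what attach each orthogonal positive piece $c_k$ to a prescribed (pure) state, which is the coordination you identified as the hard part but did not supply.
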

\begin{proof}
We may assume that $B$ is unital and separable.
Let $F \subset A$ be a finite subset of norm-one elements and $\varepsilon>0$ be arbitrary.
By the Krein--Milman theorem, there exists a convex combination $\phi' = \sum_{k=1}^n \lambda_k \psi_k$ of pure states such that
$| \phi (a) - \phi' (a) | < \varepsilon$ for $a\in F$.
By the Akemann--Anderson--Pedersen excision theorem \cite{AAP},
we find a decreasing sequence of positive elements $(e_{k,m})_m$ of norm 1 for $k=1,\dots,n$
such that for sufficiently large $m$, $e_k:=e_{k,m}$ satisfies $\| \psi_k (a)e_{k}^2 - e_k a e_k \|< \varepsilon$ for $a \in F$.
Let $(\calH_\phi, \pi_\phi, \xi_\phi)$ be the GNS representation associated with $\phi$.
Set $b_0 :=1$.
We will find positive elements $b_k$, recursively for $k=1,\dots, n$ such that $\phi (b_k^2) =1$, $\| e_k b_k \xi_\phi - b_k \xi_\phi \| < \varepsilon/{n^2}$
and $|\phi (b_j a b_k) | < \varepsilon$ for $a \in F$ and $j=0,1\dots, k-1$.
To see this, one observes that $e_{k,m}$ can be taken as $1- h_k^{1/m}$, where $h_k$ is a strictly positive element in the hereditary subalgebra $\ker \psi_k \cap (\ker \psi_k)^*$.
Then, $\pi_\phi(1 - h_k^{1/m})$ converges to the projection $p$ onto $\ker \pi_\phi(h)$ strongly.
Observe that $p \pi_\phi (a) p = \psi_k (a)p$ for $a \in B$.
Thus, the assumption implies either $p=0$ or $p \notin \pi_\phi(B)$.
Since $h_k$ is not invertible in $B$, 
$0$ is not isolated in the spectrum $\spec (h_k)$ of $h_k$.
Take a decreasing sequence $(\delta_j)_j$ in $\spec (h) \setminus \{0 \}$ such that $\lim_{j\to \infty} \delta_j =0$.
For each $j$, choose a positive function $f_j \in C(\spec (h)) \cong \rmC^*(h,1)$ such that
$f_j(\delta_j) =1$ and $f_j $ vanishes outside $( (\delta_{j+1}+\delta_j )/2, (\delta_j +\delta_{j-1})/2 )$.
Since $\phi$ is faithful, $\phi (f_j^2)$ is non-zero.
By the choice of $f_k$'s, one concludes that $( \phi(f_j^2)^{-1/2} f_j \xi_\phi )_ j$ is an orthonormal system.
We also have $\| h_k^{1/m} \phi(f_j^2)^{-1/2} f_j \xi_\phi  \| \leq \delta_{j-1}^{1/m} \longrightarrow 0$ as $j \to \infty$.
Thus, for sufficiently large $j$, $b_k:=\phi(f_j^2)^{-1/2}f_j$ satisfies
$\| e_k b_k \xi_\phi - b_k \xi_\phi \| < \varepsilon$
and $|\phi (b_j a b_k) | < \varepsilon/{n^2}$ for $a \in F$ and $j=0,\dots, k-1$.
Now, letting $b:= \sum_{k=1}^n \lambda_k^{1/2} b_k$ we have
\[
\phi(b a b )  \approx_\varepsilon \sum_{k=1}^n \lambda_k \phi (b_k a b_k) \approx_{2\varepsilon} \sum_{k=1}^n \lambda_k \phi (b_k e_k a e_k b_k)
\approx_\varepsilon \sum_{k=1}^n \lambda_k  \psi_k(a) \phi (b_k e_k^2  b_k) \approx_{2\varepsilon} \phi' (a) \approx_\varepsilon \phi (a).
\]
and $|\phi (a b) | < \varepsilon$ for $a \in F$.
\end{proof}

Finally, we close the paper by giving another sufficient condition for the assumption of Theorem \ref{thm-rigid}
which is applicable for possibly non-faithful states.
\begin{lemma}\label{lem-diffuse}
Let $\phi$ be a state on a unital $\rmC^*$-algebra $B$ and $(\calH_\phi, \pi_\phi, \xi_\phi)$ be its GNS representation.
Suppose either
\begin{itemize}
\item[(i)]there exists a unital $\rmC^*$-subalgebra $B_0$ of the centralizer $\{x \in B \mid \phi(x b) = \phi(bx) \text{ for } b \in B \}$
of $\phi$ with $\phi$-preserving conditional expectation $E \colon B \to B_0$
such that $\phi|_{B_0}$ is GNS-essential, or
\item[(ii)]there exists a unitary $u$ in the von Neumann algebra $\pi_{\phi}(B)''$ such that $\i<u^k \xi_{\phi},\xi_{\phi}>=0$ for $k\in \lZ \setminus \{0\}$ and $\i<u^*xu \xi_{\phi}, \xi_{\phi}> = \phi(a)$ for $x \in \pi_{\phi}(B)''$.
\end{itemize}
Then, there exists a net $(b_i)_i$ in $B$ such that $\lim_i \phi (b_i b_i^*) =1$, $\lim_i \phi (b_i^*x b_i ) =\phi(x)$
and $\lim_i \phi(xb_i) =0$
for $x \in  B$.
\end{lemma}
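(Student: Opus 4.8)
The plan is to treat the two hypotheses separately. Under (i) the statement reduces at once to Glimm's lemma applied to the pair $(B_0,\phi|_{B_0})$, while under (ii) it comes from the strong-$*$ Kaplansky density theorem applied to the high powers of $u$, which behave like a Glimm-type net with respect to $\phi$.

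\emph{Case (i).} Since $\phi|_{B_0}$ is GNS-essential, Glimm's lemma (Lemma~\ref{lem-Glimm}) produces a net $(b_i)_i$ in $\ker(\phi|_{B_0})\subseteq B_0$ with $\phi(b_i^*b_i)=1$, $\lim_i\phi(yb_i)=0$ and $\lim_i\phi(b_i^*yb_i)=\phi(y)$ for all $y\in B_0$. I claim the same net works for all of $B$. Since $E$ is a $\phi$-preserving conditional expectation onto $B_0$ and the $b_i$ lie in $B_0$, the bimodule property of $E$ gives, for every $x\in B$, the identities $\phi(b_i^*xb_i)=\phi(b_i^*E(x)b_i)\to\phi(E(x))=\phi(x)$ and $\phi(xb_i)=\phi(E(x)b_i)\to 0$; and $\phi(b_ib_i^*)=\phi(b_i^*b_i)=1$ because $b_i$ belongs to the centralizer of $\phi$. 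So (i) is immediate.

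\emph{Case (ii).} Write $\tilde\phi(\,\cdot\,)=\langle\,\cdot\,\xi_\phi,\xi_\phi\rangle$ for the induced vector state on $M=\pi_\phi(B)''$, so that the hypotheses read $\tilde\phi(u^k)=\delta_{k,0}$ for $k\in\mathbb Z$ and $\tilde\phi(u^*xu)=\tilde\phi(x)$ for $x\in M$. Iterating the second identity gives $\tilde\phi((u^n)^*xu^n)=\tilde\phi(x)$ for all $n\ge 1$ and $x\in M$, while the first identity makes $\{u^n\xi_\phi\}_{n\ge 0}$ an orthonormal system, so $u^n\xi_\phi\to 0$ weakly as $n\to\infty$ by Bessel's inequality. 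Thus, for large $n$, the element $u^n$ behaves exactly like the net sought in the lemma, the only defect being that $u^n$ need not lie in $\pi_\phi(B)$. To fix this, fix a finite set $F$ of norm-one elements of $B$ and an $\varepsilon>0$, choose $n$ so large that $|\langle u^n\xi_\phi,\pi_\phi(x)^*\xi_\phi\rangle|<\varepsilon$ for all $x\in F$, and then, by the strong-$*$ Kaplansky density theorem, choose a contraction $b\in B$ with $\|\pi_\phi(b)\xi_\phi-u^n\xi_\phi\|$ and $\|\pi_\phi(b)^*\xi_\phi-u^{-n}\xi_\phi\|$ both smaller than a suitable fixed multiple of $\varepsilon$. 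Writing $\phi(bb^*)=\|\pi_\phi(b)^*\xi_\phi\|^2$, $\phi(b^*xb)=\langle\pi_\phi(x)\pi_\phi(b)\xi_\phi,\pi_\phi(b)\xi_\phi\rangle$ and $\phi(xb)=\langle\pi_\phi(b)\xi_\phi,\pi_\phi(x)^*\xi_\phi\rangle$, and comparing these with the corresponding expressions with $u^n$ in place of $\pi_\phi(b)$ — which equal, respectively, $1$, $\phi(x)$ and the quantity already bounded by $\varepsilon$ — a routine triangle-inequality estimate yields $|\phi(bb^*)-1|$, $|\phi(b^*xb)-\phi(x)|$ and $|\phi(xb)|$ all bounded by a constant times $\varepsilon$ for $x\in F$. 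Letting $(F,\varepsilon)$ run over the directed set of pairs consisting of a finite subset of $B$ and a positive real then produces the required net.

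The estimates are entirely routine perturbation arguments; the only points deserving a little attention, both in case (ii), are that controlling $\phi(bb^*)=\|\pi_\phi(b)^*\xi_\phi\|^2$ forces us to approximate $\pi_\phi(b)^*\xi_\phi$ as well as $\pi_\phi(b)\xi_\phi$, so the strong-$*$ (rather than merely strong) form of Kaplansky density is what is needed, and that the power $n$ must be chosen before invoking Kaplansky density, making the construction of $b$ a two-stage selection. There is no genuine obstacle here: the substantive observation is simply that the Haar-unitary conditions on $u$ make its large powers play the role of the Glimm net used in case (i).
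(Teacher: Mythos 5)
Your proof is correct and follows essentially the same route as the paper: case (i) is the identical reduction to Glimm's lemma via the $\phi$-preserving conditional expectation, and case (ii) rests on the same idea of approximating large powers of the Haar unitary $u$ by elements of $B$ using Kaplansky density. The only (immaterial) difference is that the paper approximates $u^k$ by a \emph{unitary} $v\in B$, so that $\phi(v^*v)=\phi(vv^*)=1$ holds exactly, whereas you approximate by a contraction in the strong-$*$ topology and control $\phi(bb^*)$ up to $\varepsilon$.
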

\begin{proof}
We may assume that $\pi_\phi$ is faithful and $B \subset \lB (\calH_\phi)$.
Assume that (i) holds.
Take a net $(b_i)_i$ in $B_0$ as in Lemma \ref{lem-Glimm}.
Then, one has $\phi(b_i^*b_i) = \phi (b_ib_i^*) =1$ for all $i$
and
$\phi (b_i^*x b_i) = \phi (E (b_i^*xb_i)) = \phi (b_i^* E (x) b_i) \longrightarrow \phi(E (x) ) = \phi (x)$ for $x\in B$.

Assume that (ii) holds.
It is sufficient to show that
for any finite subset $F$ of the unit ball of $B$ and $\varepsilon>0$ arbitrarily,
there exists $b \in B$ such that $\phi(b^*b) = \phi (bb^*)$,  	
$|\phi (x b)| < \varepsilon$ and $| \phi (b^* x b ) - \phi (x) | < \varepsilon$ for $x \in F$.
By assumption, there exists a unitary $u$ in the centralizer $(B'')_\phi$ such that $\phi (u^n) =0$ for $n \neq 0$.
For any $\delta>0$ and $k \in \lN$,
by the Kaplansky density theorem,
there exists a unitary $v \in B$ such that $\|v \xi_\phi - u^k \xi_\phi \| < \delta$.
Then, it is easy to check that $b:= v$ is the desired one for sufficiently small $\delta$ and sufficiently large $k$.
\end{proof}
\begin{remark}
In the previous lemma, (i) does not imply (ii) in general.
For example, let $B =C([0,1])$ and $\phi$ be the state defined by the Lebesgue measure.
Then, $(\phi +\delta_0)/2$ is GNS-essential since $B$ has no non-trivial projections,
but the von Neumann algebra obtained by the GNS representation is isomorphic to $L^\infty ([0,1]) \oplus \lC$.
\end{remark}

\section*{Acknowledgment}
The author appreciates his supervisor, Yoshimichi Ueda, for his constant encouragement.
He is grateful to Narutaka Ozawa for showing Lemma \ref{lem-Ozawa} to him.
This work was supported by the Research Fellow of the Japan Society for the Promotion of Science.

\end{document}